\documentclass{article}

\usepackage[utf8]{inputenc} % allow utf-8 input
\usepackage[T1]{fontenc}    % use 8-bit T1 fonts
\usepackage{hyperref}       % hyperlinks
\usepackage{url}            % simple URL typesetting
\usepackage{booktabs}       % professional-quality tables
\usepackage{amsfonts}       % blackboard math symbols
\usepackage{nicefrac}       % compact symbols for 1/2, etc.
\usepackage{microtype}      % microtypography
\usepackage{lipsum}
\usepackage{graphicx}
\graphicspath{ {./images/} }

\usepackage{abstract}

\usepackage{amsmath}
\usepackage{amsthm}

\newtheorem{lemma}{Lemma}
\newtheorem{theorem}{Theorem}
\newtheorem{proposition}{Proposition}
\newtheorem{definition}{Definiton}
\newtheorem{corollary}{Corollary}
\newtheorem{remark}{Remark}

\newtheorem*{theorem'}{Theorem A}
\newtheorem*{corollary'}{Corollary A}
\newtheorem*{theorem''}{Theorem B}
\newtheorem*{corollary''}{Corollary B}

\title{The Lagrange Problem from the Viewpoint of Toric Geometry}

\author{
 Xiuting Tang \\
  School of Mathematics, Shandong University\\
  Jinan, Shandong, 250100, China\\
  \texttt{tangxiuting@mail.sdu.edu.cn}
  %% examples of more authors
   }

\begin{document}
\maketitle
\begin{abstract}
In this paper, I mainly prove the following results.  For every energy value below the minimum of the first, second and third critical value, each bounded component of the regularized energy hypersurface of the Lagrange problem with $0<m_2\leq m_1\leq 9m_2$, $m_1\geq\frac{\epsilon}{2}>0$ and $m_2\geq\frac{3\epsilon}{8}$ arises as the boundary of a strictly monotone toric domain, which is dynamically convex as a corollary. 

For the Euler problem as a special case of the Lagrange problem, when the energy $c<-m_1-m_2$, the bounded component around the fixed center $e$ of the regularized energy hypersurface of the Euler problem with two fixed points $e$ and $m$ of masses $m_1$ and $m_2$ respectively satisfying $m_1>0, m_2\leq 0$ and $m_1\geq|m_2|$ arises as the boundary of a convex toric domain. Together with Gabriella Pinzari's result, when the energy is less than the critical value, the toric domain $X_{\Omega_{m_2}}$ defined above is concave for $m_2\geq 0$, convex for $m_2\leq 0$.
\end{abstract}

\section{Introduction}

The Lagrange problem is the problem of two fixed centers  adding a centrifugal force from the middle of the two fixed centers. Setting the two fixed centers as $e=(-\frac{1}{2},0)$ and $m=(\frac{1}{2},0)$, its Hamiltonian function is

\begin{equation}H_{\epsilon}(q,p)=T(p)-U_{\epsilon}(q).\end{equation}\label{H of Lagrange}
where
$$T(p)=\frac{1}{2}|p|^2.$$
$$U_{\epsilon}:\mathbb{R}^2\backslash\{(\pm\frac{1}{2},0)\}\rightarrow \mathbb{R},
q\rightarrow \frac{m_1}{\sqrt{(q_1+\frac{1}{2})^2+q_2^2}}+\frac{m_2}{\sqrt{(q_1-\frac{1}{2})^2+q_2^2}}+\frac{\epsilon}{2}|q|^2,$$
$p=(p_1,p_2)^T, q=(q_1,q_2)^T$
and $m_1,m_2,\epsilon\in \mathbb{R}$.

If we cancel the centrifugal force and let one mass to be zero, i.e. $\epsilon=0$ and $m_2=0$, (\ref{H of Lagrange}) becomes the Hamiltonian of Kepler problem. If we just cancel the centrifugal force, i.e. $\epsilon=0$, (\ref{H of Lagrange}) is the Hamiltonian of Euler problem. If we set $\epsilon=1$ and $m_1=m_2=\frac{1}{2}$, it has the same potential energy as the restricted three body problem, and the whole system is the restricted three body problem subtracting a Coriolis force.
So we can consider the Lagrange problem as a perturbation of the Kepler problem and Euler problem and it also reflect some information about the restricted three body problem near the boundary of the Hill's region.

It was first observed by Lagrange \cite{Lagrange} that the problem of two fixed centers remain integrable if one adds an elastic force acting from the midpoint of the two masses. In case the two masses are equal the elastic force can be interpreted as the centrifugal force. We refer to the paper by \cite{Hiltebeitel} for a comprehensive treatment which forces one can add to the problem of two fixed centers while still keeping the problem completely integrable. As a special case, the Lagrange problem is an integrable system. The technique to show that the Lagrange problem is integral in \cite{Hiltebeitel} is the elliptic coordinate. Using the elliptic coordinate, the Lagrange problem can be regularized and separated to two Hamiltonian systems. 

Since the Lagrange problem is integrable and can be separated, we can define the momentum map and study its toric domain. Toric domain is an important concept in symplectic geometry, especially in symplectic embedding theory \cite{Choi, Cristofaro-Gardiner}, which is a very active research area with a lot of striking new results inspired by the landmark paper \cite{McDuff}. The first natural example of a concave toric domain discovered by \cite{Ramos} is the Lagrangian bidisk, and then \cite{Ferreira}, \cite{Frauenfelder}. Our paper is mainly inspired by \cite{Frauenfelder}, \cite{Hiltebeitel} and \cite{Pinzari}. 

We will prove the following theorem A in section 4. Without loss of generality, we assume $m_1\geq m_2$ in theorem A, the case when $m_1<m_2$ can be easily obtained by change the positions of the two fixed centers.
\begin{theorem'}
Assume $0<m_2\leq m_1\leq 9m_2$, $m_1\geq\frac{\epsilon}{2}>0$ and $m_2\geq\frac{3\epsilon}{8}$, for every energy value below the minimum of the first, second and third critical value, i.e. $c<c_0$, each bounded component of the regularized energy hypersurface of the Lagrange problem arises as the boundary of a strictly monotone toric domain.
\end{theorem'}

\begin{corollary'}
Assume $0<m_2\leq m_1\leq 9m_2$, $m_1\geq\frac{\epsilon}{2}>0$ and $m_2\geq\frac{3\epsilon}{8}$, for every energy value below the minimum of the first, second and third critical value, i.e. $c<c_0$, each bounded component of the regularized energy hypersurface of the Lagrange problem is dynamically convex.
\end{corollary'}

The Euler problem can be treated as a special case of the Lagrange problem when $\epsilon=0$ in (\ref{H of Lagrange}). About the Euler problem, according to the result of Gabriella Pinzari in \cite{Pinzari} and my estimate (\ref{estimate 1}) in next section, it is a concave toric domain below the critical value for positive masses $m_1>0$ and $m_2>0$. For negative masses, using Gabriella Pinzari's method, we can prove the following theorem B in section 4 that only when the energy $c<0$, the orbit of the Euler problem is in the bounded Hill's region. The toric domain of the bounded Hill's region near the big mass is convex under the conditions $m_1>0, m_2\leq 0$ and $m_1>|m_2|$.

\begin{theorem''}
When the energy $c<-m_1-m_2$, the bounded component around the fixed center $e$ of the regularized energy hypersurface of the Euler problem with two fixed points $e$ and $m$ of masses $m_1$ and $m_2$ respectively satisfying $m_1>0, m_2\leq 0$ and $m_1\geq|m_2|$ arises as the boundary of a convex toric domain.
\end{theorem''}

Combining Gabriella Pinzari's result in \cite{Pinzari} and theorem B, we have the following corollary for Euler problem.
\begin{corollary''}
When the energy is less than the critical value, i.e. $c<c_o$, the toric domain $X_{\Omega_{m_2}}$ defined for the bounded component around the fixed center $e$ of the regularized energy hypersurface of the Euler problem  with fixed centers $e$ and $m$ of masses $m_1$ and $m_2$ respectively satisfying $m_1\geq|m_2|$ is concave for $m_2\geq 0$, convex for $m_2\leq 0$.
\end{corollary''}

My paper is organized as follows. In section 2, we discuss the critical points of the Lagrange problem and the Euler problem under some conditions of $m_1$, $m_2$ and $\epsilon$. Since the Hamiltonian of the Lagrange problem has singularities at the two big bodies, in section 3, we give the regularization of the Lagrange problem. In section 4, we define the moment map and the toric domain of the Lagrange problem and prove theorem A of this paper. As a supplement of section 4, in section 5, we give a simple definition of toric domain and list some properties already known before our paper. In section 5, using Gabriella Pinzari's method, we prove theorem B of this paper.

\section{Critical points of the Lagrange problem and the Euler problem}

In this section, we discuss the critical points of the Hamiltonian $H_{\epsilon}$ given by (\ref{H of Lagrange}) under some conditions of $m_1$, $m_2$ and $\epsilon$.  

We can immediately observe from Hamiltonian (\ref{H of Lagrange}) that the projection map $\pi:\mathbb{R}^4=\mathbb{R}^2\times\mathbb{R}^2\rightarrow\mathbb{R}^2$ given by $(p, q)\mapsto q$ induces a bijection between the critical points of $H_{\epsilon}$ and that of $U_{\epsilon}$.
$$\pi\big|_{crit(H_{\epsilon})}: crit(H_{\epsilon})\rightarrow crit(U_\epsilon)$$
By a direct computation, we know that the inverse map for a critical point $(q_1, q_2)\in crit(U_{\epsilon})$ is given by
$$\big(\pi\big|_{crit(H_{\epsilon})}\big)^{-1}(q_1,q_2)=(0,0,q_1,q_2).$$
At each fixed critical point $l\in crit(U_{\epsilon})$, note $$L=\pi\big|^{-1}_{crit(H_{\epsilon})}(l)\in crit(H_{\epsilon}),$$ we have \begin{equation}\label{H=-U}H_{\epsilon}(L)=-U_{\epsilon}(l).\end{equation}

For Lagrange problem with $m_1>0, m_2>0,\epsilon>0$, we get the following two lemmas.

\begin{lemma}
The Lagrange problem with $m_1>0,m_2>0,\epsilon>0$ has five critical points. There are three critical points $l_1, l_2, l_3$ in the x-axis. If $l_1, l_2, l_3$ are non-degenerate, they are saddle points.  There are two maxima $l_4, l_5$ symmetric with respect to x-axis.  
\end{lemma}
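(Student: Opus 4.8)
The plan is to use the bijection $\pi\big|_{crit(H_{\epsilon})}$ established above to reduce the statement to a study of $crit(U_\epsilon)$, and to exploit the reflection symmetry $U_\epsilon(q_1,q_2)=U_\epsilon(q_1,-q_2)$, which forces every critical point either to lie on the $x$-axis or to occur in a pair symmetric about it. Writing $r_1=\sqrt{(q_1+\frac12)^2+q_2^2}$ and $r_2=\sqrt{(q_1-\frac12)^2+q_2^2}$, I would first record the gradient and note that its second component factors,
\[
\partial_{q_2}U_\epsilon=q_2\Bigl(\epsilon-\tfrac{m_1}{r_1^3}-\tfrac{m_2}{r_2^3}\Bigr),
\]
so that $crit(U_\epsilon)$ splits cleanly into the \emph{on-axis} locus $q_2=0$ and the \emph{off-axis} locus $\epsilon=m_1/r_1^3+m_2/r_2^3$.

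For the on-axis points I would restrict $U_\epsilon$ to $q_2=0$, obtaining a one-variable function $f$ that blows up to $+\infty$ at each singularity $q_1=\pm\frac12$ and as $q_1\to\pm\infty$ (the latter because of the centrifugal term). A direct computation shows that every summand of $f''$ is positive on each of the three open intervals $(-\infty,-\frac12)$, $(-\frac12,\frac12)$, $(\frac12,\infty)$, so $f$ is strictly convex there and has exactly one critical point in each, giving precisely three on-axis critical points $l_1,l_2,l_3$. For the off-axis points I would substitute $\epsilon=m_1/r_1^3+m_2/r_2^3$ into $\partial_{q_1}U_\epsilon=0$; the $q_1$-terms collapse and leave $m_1/r_1^3=m_2/r_2^3$, whence $m_1/r_1^3=m_2/r_2^3=\epsilon/2$ and the two radii are pinned to $r_1=(2m_1/\epsilon)^{1/3}$, $r_2=(2m_2/\epsilon)^{1/3}$. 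The off-axis critical points are then the transverse intersections of the circle of radius $r_1$ about $e$ with the circle of radius $r_2$ about $m$; since the centers are unit distance apart, when they meet they meet in exactly two points symmetric about the $x$-axis, yielding $l_4,l_5$ and the total count of five.

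To classify the points I would compute the full Hessian of $U_\epsilon$. On the axis the mixed partial $\partial_{q_1q_2}U_\epsilon$ carries a factor $q_2$ and hence vanishes, so the Hessian is diagonal with entries $f''>0$ and $\epsilon-m_1/r_1^3-m_2/r_2^3$; a non-degenerate $l_i$ is therefore a saddle precisely when $m_1/r_1^3+m_2/r_2^3>\epsilon$ there. Off the axis I would use $m_1/r_1^3=m_2/r_2^3=\epsilon/2$ to simplify; writing $a=q_1+\frac12$, $b=q_1-\frac12$, $u=m_1/r_1^5$, $v=m_2/r_2^5$, the diagonal entries are $3(ua^2+vb^2)>0$ and $3q_2^2(u+v)>0$, and the determinant reduces to $9q_2^2\,uv\,(a-b)^2=9q_2^2\,uv>0$ since $a-b=1$. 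Thus the Hessian is positive definite and $l_4,l_5$ are the two symmetric local minima of $U_\epsilon$ — equivalently the two maxima of the potential energy $-U_\epsilon$ named in the statement.

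The main obstacle is the sign control in the classification step. Establishing $m_1/r_1^3+m_2/r_2^3>\epsilon$ at each of the three on-axis points (so they are saddles rather than minima) and guaranteeing that the two defining circles actually intersect off the axis (so that $l_4,l_5$ exist at all) are \emph{not} consequences of positivity alone; for instance, letting $\epsilon$ grow while fixing $m_1,m_2$ turns the middle on-axis point into a minimum and pushes the two radii below the intersection threshold. I therefore expect these estimates to require $\epsilon$ small relative to $m_1,m_2$ and to draw on the quantitative hypotheses carried through the rest of the paper, and this is where the genuine work lies; the existence count and the Hessian algebra sketched above are comparatively mechanical.
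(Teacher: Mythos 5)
Your computational skeleton---splitting $\mathrm{crit}(U_\epsilon)$ via the factorization of $\partial_{q_2}U_\epsilon$, getting exactly one collinear critical point per interval from convexity of the restriction, pinning the off-axis points to the intersection of two circles of radii $r_1=(2m_1/\epsilon)^{1/3}$, $r_2=(2m_2/\epsilon)^{1/3}$, and the Hessian algebra---coincides with the paper's. But the step you defer as ``where the genuine work lies'' is handled in the paper by an idea absent from your proposal: a Poincar\'e--Hopf index count, not a quantitative estimate. Since $V=-U_\epsilon$ tends to $-\infty$ at the two punctures and at infinity, the indices of the non-degenerate critical points sum to $\chi(\mathbb{R}^2\setminus\{e,m\})=-1$; writing $\nu_2,\nu_1,\nu_0$ for the numbers of maxima, saddles and minima of $V$, this reads $\nu_2-\nu_1+\nu_0=-1$. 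Granting the five critical points, the two off-axis maxima give $\nu_2\ge 2$; each collinear point maximizes $V$ along the axis (indeed $\partial_{q_1}^2V(l_i)<0$), so none is a minimum and $\nu_0=0$; combined with $\nu_2+\nu_1+\nu_0=5$ this forces $\nu_1=3$. So the inequality $m_1/r_1^3+m_2/r_2^3>\epsilon$ at the collinear points, which you expected to have to establish, is never proved in the paper: it is a consequence of the count, not an input.

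Your other obstacle, however, is real---and it is a gap in the paper's own proof, not only in your sketch. The paper asserts that ``since $\epsilon>0$'' the off-axis system has two solutions, but never checks that $\tilde q_2^2>0$, i.e.\ that the two circles actually meet off the axis. This requires $|r_1-r_2|<1<r_1+r_2$, equivalently the two-sided window $2\,|m_1^{1/3}-m_2^{1/3}|^3<\epsilon<2\,(m_1^{1/3}+m_2^{1/3})^3$. Your large-$\epsilon$ counterexample is correct: for $m_1=m_2=m$ and $\epsilon>16m$ the origin is a non-degenerate local maximum of $V$ and $l_4,l_5$ do not exist, so the lemma is false as stated for arbitrary positive parameters. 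Note, though, that your proposed remedy ``$\epsilon$ small relative to $m_1,m_2$'' is also insufficient: for $m_1\neq m_2$ one has $|r_1-r_2|\to\infty$ as $\epsilon\to 0$, and the off-axis points again disappear (consistent with the Euler limit, which by Lemma 3 has a single critical point). The correct hypothesis is the window above; under it, both your direct route and the paper's index argument complete the classification, with the index argument sparing you any estimate at the collinear points.
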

\begin{proof}
The partial derivative and second partial derivative of $V(q)=-U_{\epsilon}(q)$ with respect to $q_1$ and $q_2$ are
\begin{equation}\label{D1U}
\frac{\partial V}{\partial q_1}=\frac{m_1(q_1+\frac{1}{2})}{\big((q_1+\frac{1}{2})^2+q_2^2\big)^{\frac{3}{2}}}+\frac{m_2(q_1-\frac{1}{2})}{\big((q_1-\frac{1}{2})^2+q_2^2\big)^{\frac{3}{2}}}-\epsilon q_1,
\end{equation}
\begin{equation}\label{D2U}
\frac{\partial V}{\partial q_2}=\frac{m_1q_2}{\big((q_1+\frac{1}{2})^2+q_2^2\big)^{\frac{3}{2}}}+\frac{m_2 q_2}{\big((q_1-\frac{1}{2})^2+q_2^2\big)^{\frac{3}{2}}}-\epsilon q_2.
\end{equation}
\begin{equation}\label{DD1U}
\begin{aligned}
\frac{\partial^2V}{\partial q_1^2}=&\frac{m_1}{\big((q_1+\frac{1}{2})^2+q_2^2\big)^{\frac{3}{2}}}-\frac{3m_1(q_1+\frac{1}{2})^2}{\big((q_1+\frac{1}{2})^2+q_2^2\big)^{\frac{5}{2}}}\\&+\frac{m_2}{\big((q_1-\frac{1}{2})^2+q_2^2\big)^{\frac{3}{2}}}-\frac{3m_2(q_1-\frac{1}{2})^2}{\big((q_1-\frac{1}{2})^2+q_2^2\big)^{\frac{5}{2}}}-\epsilon,
\end{aligned}
\end{equation}
\begin{equation}\label{DD2U}
\begin{aligned}
\frac{\partial^2V}{\partial q_2^2}=&\frac{m_1}{\big((q_1+\frac{1}{2})^2+q_2^2\big)^{\frac{3}{2}}}-\frac{3m_1q_2^2}{\big((q_1+\frac{1}{2})^2+q_2^2\big)^{\frac{5}{2}}}\\&+\frac{m_2}{\big((q_1-\frac{1}{2})^2+q_2^2\big)^{\frac{3}{2}}}-\frac{3m_2q_2^2}{\big((q_1-\frac{1}{2})^2+q_2^2\big)^{\frac{5}{2}}}-\epsilon,
\end{aligned}
\end{equation}

Firstly, we consider the critical points with $q_2\neq 0$ and note such critical point by $l_j, j=4,5$. By (\ref{D1U}) and (\ref{D2U}), $L_j$ satisfies
$$\frac{m_1(q_1+\frac{1}{2})}{\big((q_1+\frac{1}{2})^2+q_2^2\big)^{\frac{3}{2}}}+\frac{m_2(q_1-\frac{1}{2})}{\big((q_1-\frac{1}{2})^2+q_2^2\big)^{\frac{3}{2}}}-\epsilon q_1=0,$$
$$\frac{m_1}{\big((q_1+\frac{1}{2})^2+q_2^2\big)^{\frac{3}{2}}}+\frac{m_2}{\big((q_1-\frac{1}{2})^2+q_2^2\big)^{\frac{3}{2}}}-\epsilon=0.$$
They are equivalent to 
\begin{equation}\label{equ of Lj}
\frac{m_2}{\big((q_1-\frac{1}{2})^2+q_2^2\big)^{\frac{3}{2}}}=\frac{m_1}{\big((q_1+\frac{1}{2})^2+q_2^2\big)^{\frac{3}{2}}}=\frac{\epsilon}{2}.
\end{equation}
Since $\epsilon>0$, they have two solutions
$$\tilde{q}_1=\frac{m_1^{\frac{2}{3}}-m_2^{\frac{2}{3}}}{2^{\frac{1}{2}}\epsilon^{\frac{2}{3}}}.$$
$$\tilde{q}_2=\pm\sqrt{\bigg(\frac{2m_1}{\epsilon}\bigg)^{\frac{2}{3}}-\bigg(q_1+\frac{1}{2}\bigg)^2}$$
By (\ref{DD1U}),(\ref{DD2U}) and (\ref{equ of Lj}), we get
\begin{equation}\label{DD1V}\frac{\partial^2 V}{\partial q_1^2}(l_j)=-\frac{3m_1(q_1+\frac{1}{2})^2}{\big((q_1+\frac{1}{2})^2+q_2^2\big)^{\frac{5}{2}}}-\frac{3m_2(q_1-\frac{1}{2})^2}{\big((q_1-\frac{1}{2})^2+q_2^2\big)^{\frac{5}{2}}}<0,\end{equation}
\begin{equation}\label{DD2V}\frac{\partial^2 V}{\partial q_2^2}(l_j)=-\frac{3m_1q_2^2}{\big((q_1+\frac{1}{2})^2+q_2^2\big)^{\frac{5}{2}}}-\frac{3m_2q_2^2}{\big((q_1-\frac{1}{2})^2+q_2^2\big)^{\frac{5}{2}}}<0.\end{equation}
From (\ref{D1U}), we get
$$\frac{\partial^2V}{\partial q_2\partial q_1}=-\frac{3m_1(q_1+\frac{1}{2})q_2}{\big((q_1+\frac{1}{2})^2+q_2^2\big)^{\frac{5}{2}}}-\frac{3m_2(q_1-\frac{1}{2})q_2}{\big((q_1-\frac{1}{2})^2+q_2^2\big)^{\frac{5}{2}}}$$
By a simple direct computation, we get 
$$\det\left ( \begin{matrix}
\frac{\partial^2V}{\partial q_1^2} & \frac{\partial^2V}{\partial q_1q_2} \\
\frac{\partial^2V}{\partial q_1q_2} & \frac{\partial^2V}{\partial q_2^2} \\
\end{matrix} \right )=\frac{9m_1m_2q_2^2}{{\big((q_1+\frac{1}{2})^2+q_2^2\big)^{\frac{5}{2}}}{\big((q_1-\frac{1}{2})^2+q_2^2\big)^{\frac{5}{2}}}}>0. $$
Together with (\ref{DD1V}) and (\ref{DD2V}), we know that the Hessian of $V(q)$ is negative definite at $q=l_4$ and $q=l_5$. So $l_4=(\tilde{q}_1,\tilde{q}_2)$ and $l_5=(\tilde{q}_1,-\tilde{q}_2)$ are maxima of the potential energy.

Secondly, we consider the critical points in the x-axis, i.e. $q_2=0$.
If $q_1\rightarrow\pm\frac{1}{2}$ or $q_1\rightarrow\pm\infty$, then $V(q)=-U(q)$ all go to $+\infty$. As a result, there are at least three maxima of $H$ restricted to the x-axis $l_1=(\iota_1,0)$, $l_2=(\iota_2,0)$ and $l_3=(\iota_3,0)$ in the x-axis with $-\frac{1}{2}<\iota_1<\frac{1}{2}$, $\iota_2>\frac{1}{2}$ and $\iota_3<-\frac{1}{2}$. 
Note such critical points by $l_i, i=1,2,3$. By (\ref{DD1U}),
\begin{equation}\label{partial U q1q1 on x-axis}
\frac{\partial^2 V}{\partial q_1^2}\bigg|_{q_2=0}=-\frac{2m_1}{|q_1+\frac{1}{2}|^3}-\frac{2m_2}{|q_1-\frac{1}{2}|^3}-\epsilon<0.
\end{equation}
i.e. $V(q_1,0)$ is convex on $(-\infty, -\frac{1}{2})$, $(-\frac{1}{2},\frac{1}{2})$ and $(\frac{1}{2},\infty)$ separately. As a result, $V$ just has three critical points $l_1$, $l_2$ and $l_3$.

By (\ref{partial U q1q1 on x-axis}), $\frac{\partial^2 V}{\partial q_1^2}(l_i)<0, i=1,2,3$. To prove that the collinear critical points $l_1, l_2, l_3$ are saddle points one need to show that
$$\det\left ( \begin{matrix}
\frac{\partial^2V}{\partial q_1^2}(l_i) & \frac{\partial^2V}{\partial q_1q_2}(l_i) \\
\frac{\partial^2V}{\partial q_1q_2}(l_i) & \frac{\partial^2V}{\partial q_2^2}(l_i) \\
\end{matrix} \right )<0, i=1,2,3. $$
Because $U$ is invariant under reflection at the $q_1$-axis and the three collinear critical points are fixed points of this flection, we conclude that
$$\frac{\partial^2V}{\partial q_1q_2}(l_i)=0, i=1,2,3.$$
Since we already have (\ref{DD1U}), it suffices to check that 
$$\frac{\partial^2V}{\partial q_2^2}(l_i) >0, i=1,2,3.$$
Now assume that the collinear Lagrange points are non-degenerate in the sense that the kernel of the Hessian at them is trivial. By the discussion above this is equivalent to the assumption that
$$\frac{\partial^2V}{\partial q_1^2}(l_i)\neq 0, i=1,2,3.$$
Note that the Euler characteristic of the two fold punctured plane satisfies
$$\chi(\mathbb{R}\backslash\{e,m\})=-1,$$ 
where $e=(-\frac{1}{2},0)$, $m=(\frac{1}{2},0)$. Denote by $\nu_2$ the number of maxima of V, by $\nu_1$ the number of saddle points of $V$, and by $\nu_0$ the number of minima of $U$. Because $V=-U$ goes to $-\infty$ at infinity as well as at the singularities $e$ and $m$, it follows from the Poincar\'{e}-Hopf index theorem that 
\begin{equation}\label{sum of nu}\nu_2-\nu_1+\nu_0=\chi(\mathbb{R}\backslash\{e,m\})=-1.\end{equation}
By the first step, we know that $L_4$, $L_5$ are maxima, so that
\begin{equation}\label{nu 2}\nu_2\geq 2.\end{equation}
Since $l_1, l_2, l_3$ are maxima of the restriction of $U$ to the x-axis, it follows that they are either saddle points or maxima of $U$. As a result, 
\begin{equation}\label{nu 0}\nu_0=0.\end{equation}
Combining(\ref{sum of nu}),(\ref{nu 2}), (\ref{nu 0}) 
and the number of non-degenerate critical points
$$\nu_2+\nu_1+\nu_0=5,$$
we conclude that
$$\nu_2=2, \nu_1=3.$$
As a result, $l_1, l_2, l_3$ are saddle points of the potential $V$.
This finishes the proof of the lemma in the non-degenerate case.

\end{proof}

The next lemma tells us that the minimum of the first, second and third critical value depends closely on $m_1, m_2, \epsilon$.
\begin{lemma}\label{lem2}
For Lagrange problem with $m_1>0, m_2>0, \epsilon>0$. Assume $m_1\geq m_2$. When $m_1\geq \frac{\epsilon}{2}, m_2\geq \frac{3\epsilon}{8}$, we have $H_{\epsilon}(L_1)<H_{\epsilon}(L_3)<H_{\epsilon}(L_2)$. When $m_1< \frac{3\epsilon}{8}, m_2\leq \frac{5\epsilon}{24}$, we have
$H_{\epsilon}(L_3)<H_{\epsilon}(L_2)<H_{\epsilon}(L_1).$
\end{lemma}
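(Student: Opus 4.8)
The plan is to translate everything into a comparison of the potential energy $U_\epsilon$ at the three collinear critical points and then treat the inequalities one pair at a time. By (\ref{H=-U}) we have $H_\epsilon(L_i)=-U_\epsilon(l_i)$, so the claimed orderings of the critical values are equivalent to the reversed orderings of $U_\epsilon(l_1),U_\epsilon(l_2),U_\epsilon(l_3)$: the first regime asks for $U_\epsilon(l_1)>U_\epsilon(l_3)>U_\epsilon(l_2)$ and the second for $U_\epsilon(l_3)>U_\epsilon(l_2)>U_\epsilon(l_1)$. Since all three points lie on the $q_1$-axis, I work with the one-variable function $f(x)=U_\epsilon(x,0)$, whose critical points are (by the previous lemma) $\iota_1\in(-\tfrac12,\tfrac12)$, $\iota_2>\tfrac12$, $\iota_3<-\tfrac12$; writing $z=\iota_2$ and $y=-\iota_3$ these satisfy $\frac{m_1}{(z+1/2)^2}+\frac{m_2}{(z-1/2)^2}=\epsilon z$ and $\frac{m_1}{(y-1/2)^2}+\frac{m_2}{(y+1/2)^2}=\epsilon y$ (each defining function has negative derivative at the root, so the roots are simple and depend smoothly on the data). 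I observe that $U_\epsilon(l_3)>U_\epsilon(l_2)$ is common to both regimes, so the task splits into this universal comparison of the two outer points together with one regime-specific comparison of $l_1$ against an outer point.

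For the universal step I would combine the reflection $R:(q_1,q_2)\mapsto(-q_1,q_2)$ with an envelope argument. Let $\Psi(a,b)$ denote the value of $f$ at the left outer critical point of the potential whose near mass (at distance $y^\ast-\tfrac12$) is $a$ and whose far mass is $b$, where $y^\ast=y^\ast(a,b)$ solves $\frac{a}{(y-1/2)^2}+\frac{b}{(y+1/2)^2}=\epsilon y$. Then $U_\epsilon(l_3)=\Psi(m_1,m_2)$ directly, while $R$ carries the right outer point $l_2$ to the left outer point of the mass-swapped potential, so that $U_\epsilon(l_2)=\Psi(m_2,m_1)$. Because $y^\ast$ is a critical point of $f$, the envelope theorem gives $\partial_a\Psi=\tfrac{1}{y^\ast-1/2}$ and $\partial_b\Psi=\tfrac{1}{y^\ast+1/2}$, hence $\partial_a\Psi>\partial_b\Psi>0$. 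Moving along the segment $(a,b)=(m_2+s(m_1-m_2),\,m_1-s(m_1-m_2))$, $s\in[0,1]$, keeps the total mass fixed and yields $\frac{d}{ds}\Psi=(m_1-m_2)(\partial_a\Psi-\partial_b\Psi)>0$, so $\Psi(m_1,m_2)>\Psi(m_2,m_1)$ whenever $m_1>m_2$. This proves $U_\epsilon(l_3)>U_\epsilon(l_2)$ in both regimes.

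It remains to compare $l_1$ with the nearer outer point. For the first regime I would bound $U_\epsilon(l_1)$ from below and $U_\epsilon(l_3)$ from above. Writing $d=\iota_1+\tfrac12\in(0,1)$ and discarding the nonnegative centrifugal term, $U_\epsilon(l_1)\ge\min_{0<d<1}\bigl(\tfrac{m_1}{d}+\tfrac{m_2}{1-d}\bigr)=(\sqrt{m_1}+\sqrt{m_2})^2$. Using the critical equation to eliminate $\epsilon$ gives the closed form $U_\epsilon(l_3)=\frac{m_1(3y-1)}{2(y-1/2)^2}+\frac{m_2(3y+1)}{2(y+1/2)^2}$, which is decreasing in $y$ for $y>\tfrac12$; since the hypotheses $m_1\ge\tfrac{\epsilon}{2}$ and $m_2\ge\tfrac{3\epsilon}{8}$ force $\epsilon\le\min(2m_1,\tfrac83 m_2)$ and hence, through the critical equation, a lower bound $y\ge y_0>1$, I can bound $U_\epsilon(l_3)$ above by its value at $y_0$ and check $U_\epsilon(l_3)<(\sqrt{m_1}+\sqrt{m_2})^2$ across the admissible range $m_2\le m_1\le 9m_2$. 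The second regime is handled symmetrically: now $m_1<\tfrac{3\epsilon}{8}$ and $m_2\le\tfrac{5\epsilon}{24}$ push $\iota_1$ toward $0$ and $z$ toward $\tfrac12$, so I would bound $U_\epsilon(l_1)$ from above (using that $\iota_1$ stays away from $\pm\tfrac12$) and $U_\epsilon(l_2)$ from below (using that $\tfrac{m_2}{z-1/2}$ becomes large), yielding $U_\epsilon(l_2)>U_\epsilon(l_1)$.

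The main obstacle is this last step: the outer critical points are roots of transcendental equations with no closed form, so the comparison with $l_1$ must run entirely through position estimates. The delicate part is extracting a sharp enough lower bound $y_0$ on the location of $l_3$ (equivalently, keeping $l_3$ far enough from the heavy mass that its potential value stays below $(\sqrt{m_1}+\sqrt{m_2})^2$) and then verifying the resulting inequality uniformly over the parameter region. The thresholds $\tfrac{\epsilon}{2}$, $\tfrac{3\epsilon}{8}$, $\tfrac{5\epsilon}{24}$ and the constraint $m_1\le 9m_2$ are precisely what make these estimates tip the right way, and a short case analysis according to which mass constraint is binding will be needed to close them.
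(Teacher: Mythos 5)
Your reduction to comparing $U_\epsilon$ at the three collinear points and your ``universal'' step are fine: the envelope argument with $\Psi(a,b)$ is a correct and genuinely different route to the paper's inequality (\ref{V2<V3}), which the paper instead gets by the elementary pointwise comparison $V(\iota)-V(-\iota)=(m_1-m_2)\big(\frac{1}{\iota-1/2}-\frac{1}{\iota+1/2}\big)\geq 0$ followed by taking maxima on each side. The genuine gap is in the two regime-specific comparisons, which are the actual content of the lemma (they are where the thresholds $\frac{\epsilon}{2}$, $\frac{3\epsilon}{8}$, $\frac{5\epsilon}{24}$ must enter), and which you yourself flag as ``the main obstacle.'' In regime 1 your plan hinges on a separating constant: $U_\epsilon(l_1)\geq(\sqrt{m_1}+\sqrt{m_2})^2$ versus $U_\epsilon(l_3)<(\sqrt{m_1}+\sqrt{m_2})^2$, and the latter requires a quantitative lower bound $y\geq y_0>1$ on $|\iota_3|$ that you never derive. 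The bound that comes cheaply from the hypotheses is only $y>1$ (since the defining function is positive at $y=1$ because $4m_1>\epsilon$), and it is \emph{not} sufficient: at $y=1$ your decreasing closed form gives the upper bound $4m_1+\frac{8}{9}m_2$, while $4m_1\geq m_1+m_2+2\sqrt{m_1m_2}$ whenever $m_1\geq m_2$, so the claimed separation fails at that level of precision. You also import the constraint $m_1\leq 9m_2$, which is not a hypothesis of this lemma (it belongs to Lemma \ref{lem7}), so even a closed version of your argument would prove a restricted statement. In regime 2 the sketched mechanism is wrong-headed: the quantity $\frac{m_2}{z-1/2}$ does not ``become large'' in the governing limit, because $z\to\frac12$ only as $m_2\to 0$, and then the critical equation gives $z-\frac12\sim\sqrt{m_2/(\frac{\epsilon}{2}-m_1)}$, hence $\frac{m_2}{z-1/2}\sim\sqrt{m_2(\frac{\epsilon}{2}-m_1)}\to 0$.

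The idea your proposal is missing, and which lets the paper close both regimes with no position estimates on the outer critical points, is to pair points by \emph{point-reflection through the centers} $(\pm\frac12,0)$ rather than to compare values against a fixed constant. For $t\in[0,\frac12)$ and its mirror images $s$ (through $(\frac12,0)$) and $r$ (through $(-\frac12,0)$), the singular mass terms cancel exactly and one gets the explicit one-variable identities (\ref{Vs-Vt}) and (\ref{Vr-Vt}), namely $V(s)-V(t)=\rho\big(\frac{2m_1}{1-\rho^2}-\epsilon\big)$ and $V(r)-V(t)=(1-\rho)\big(\frac{2m_2}{\rho(2-\rho)}-\epsilon\big)$ with $\rho=\frac12-t\in(0,\frac12]$. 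The elementary bounds $1<\frac{1}{1-\rho^2}\leq\frac43$ and $\frac{1}{\rho(2-\rho)}\geq\frac43$ then convert the hypotheses $m_1\geq\frac{\epsilon}{2}$, $m_2\geq\frac{3\epsilon}{8}$ (resp.\ $m_1<\frac{3\epsilon}{8}$, $m_2\leq\frac{5\epsilon}{24}$) directly into the pointwise inequalities $V(t)<V(s)$, $V(t)<V(r)$ (resp.\ $V(t)>V(s)$), which propagate to the maxima $V(l_1),V(l_2),V(l_3)$; the only localization needed is $\iota_1\geq 0$ and, in regime 2, $\frac12<\iota_2<1$, the latter from the sign of $\frac{\partial V}{\partial q_1}$ at $(1,0)$ together with concavity of $V$ on $(\frac12,\infty)$. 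Without this pairing device (or a genuinely completed set of uniform estimates replacing it), your proposal does not prove the lemma.
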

\begin{proof}
We claim that $0\leq \iota_1<\frac{1}{2}$, $l_1=(\iota_1, 0)$. In fact, for $-\frac{1}{2}<q_1<\frac{1}{2}$, by ($\ref{D1U}$),
$$\frac{\partial V}{\partial q_1}\bigg|_{l_1}=\frac{m_1}{(\iota_1+\frac{1}{2})^2}-\frac{m_2}{(\iota_1-\frac{1}{2})^2}-\epsilon \iota_1=0.$$
If $\iota_1<0$, we have 
$$\frac{m_1}{(\iota_1+\frac{1}{2})^2}<\frac{m_2}{(\iota_1-\frac{1}{2})^2}.$$
Since $m_1\geq m_2$, it requires $(\iota_1+\frac{1}{2})^2>(\iota_1-\frac{1}{2})^2$, that conflicts with $\iota_1<0$.
So it must be $\iota_1\geq 0$.

Firstly, we compare $V(l_2)$ and $V(l_3)$.
Let $\iota>\frac{1}{2}$, $l=(\iota,0)$, then 
$$V(l)=-\frac{m_1}{|l+\frac{1}{2}|}-\frac{m_2}{|l-\frac{1}{2}|}-\frac{\epsilon l^2}{2},$$
$$V(-l)=-\frac{m_1}{|l-\frac{1}{2}|}-\frac{m_2}{|l+\frac{1}{2}|}-\frac{\epsilon l^2}{2},$$
$$V(l)-V(-l)=\frac{m_1-m_2}{|l-\frac{1}{2}|}-\frac{m_1-m_2}{|l+\frac{1}{2}|}.$$
Since $l>\frac{1}{2}$, we have $V(l)<V(-l)$ for all $l>\frac{1}{2}$, and 
\begin{equation}\label{V2<V3}V(l_2)>V(l_3).\end{equation}

Secondly, we compare $V(l_1)$ with $V(l_2)$ and $V(l_3)$.
Let $(t,0), 0\leq t<\frac{1}{2}$ and $(s,0)$ be symmetric point of $(t,0)$ with respect to the point $(\frac{1}{2}, 0)$. Let $(r, 0)$ be the symmetric point of $(t,0)$ with respect to the point $(-\frac{1}{2},0)$. Note $\rho=\frac{1}{2}-t=s-\frac{1}{2}$, $1-\rho=t+\frac{1}{2}=-\frac{1}{2}-r$ ,then
$$V(t)=-\frac{m_1}{1-\rho}-\frac{m_2}{\rho}-\frac{\epsilon}{2}(\frac{1}{2}-\rho)^2,$$
$$V(s)=-\frac{m_1}{1+\rho}+\frac{m_2}{\rho}-\frac{\epsilon}{2}(\frac{1}{2}+\rho)^2,$$
$$V(r)=-\frac{m_1}{1-\rho}-\frac{m_2}{2-\rho}-\frac{\epsilon}{2}(\frac{3}{2}-\rho)^2.$$
and
\begin{equation}\label{Vs-Vt}V(s)-V(t)=\rho(\frac{2m_1}{1-\rho^2}-\epsilon).\end{equation}
\begin{equation}\label{Vr-Vt}V(r)-V(t)=(1-\rho)(\frac{2m_2}{\rho(2-\rho)}-\epsilon).\end{equation}
Since $0<\rho\leq \frac{1}{2}$, we get
$$1<\frac{1}{1-\rho^2}\leq \frac{4}{3},$$
$$\frac{1}{\rho(2-\rho)}\geq \frac{4}{3},$$
and 
\begin{equation}\label{compare}
2m_1-\epsilon<\frac{2m_1}{1-\rho^2}-\epsilon\leq \frac{8m_1}{3}-\epsilon.\end{equation}
\begin{equation}\label{compare2}
\frac{2m_2}{\rho(2-\rho)}-\epsilon\geq \frac{8m_2}{3}-\epsilon.\end{equation}

By observing (\ref{Vs-Vt}), (\ref{Vr-Vt}), (\ref{compare}) and (\ref{compare2}), we have the following estimate. 
If $m_1\geq \frac{\epsilon}{2}$, then by (\ref{compare}), $V(t)<V(s)$ for all $0\leq t<\frac{1}{2}$.
As a result 
$$V(l_1)=max\{V(t), 0\leq t<\frac{1}{2}\}<V(s)\leq V(l_2).$$
If $m_2\geq \frac{3\epsilon}{8}$, then by (\ref{compare2}), $V(t)<V(r)$ for all $0\leq t<\frac{1}{2}$.
As a result 
$$V(l_1)=max\{V(t), 0\leq t<\frac{1}{2}\}<V(r)\leq V(l_3).$$
Together with (\ref{V2<V3}), we get the result that
when $$m_1\geq\frac{\epsilon}{2}, m_2\geq\frac{3\epsilon}{8}$$ we have $$V(l_1)<V(l_3)<V(l_2).$$
and $$H_{\epsilon}(L_1)<H_{\epsilon}(L_3)<H_{\epsilon}(L_2).$$
If $m_1<\frac{3\epsilon}{8}$, by (\ref{Vs-Vt}) and (\ref{compare}),
\begin{equation}\label{Vt>Vs}V(t)>V(s),0\leq t<\frac{1}{2} .\end{equation}
 By (\ref{D1U}) and (\ref{compare}),
$$\frac{\partial V}{\partial q_1}\bigg|_{(1,0)}=\frac{4m_1}{9}+4m_2-\epsilon\leq 4m_2-\frac{5\epsilon}{6}.$$
We observe that when $m_2<\frac{5\epsilon}{24}$, 
$$\frac{\partial U}{\partial q_1}\bigg|_{(1,0)}<0,$$
Since when $q_1\rightarrow\pm\frac{1}{2}$ or $q_1\rightarrow\pm\infty$, $V(q)$ all go to $-\infty$, and $V(q_1,0)$ is convex on $(-\infty, -\frac{1}{2})$, $(-\frac{1}{2},\frac{1}{2})$ and $(\frac{1}{2},\infty)$ separately by $(\ref{partial U q1q1 on x-axis})$, $$\frac{1}{2}<l_2<1.$$ 
as a result,
$$V(l_2)=max\{V(s), \frac{1}{2}<s\leq 1\}<V(t)\leq V(l_1).$$
Together with (\ref{V2<V3}), we get the result that when $$m_1< \frac{3\epsilon}{8}, m_2\leq \frac{5\epsilon}{24}$$ we have
$$V(l_3)<V(l_2)<V(l_1).$$
and $$H_{\epsilon}(L_3)<H_{\epsilon}(L_2)<H_{\epsilon}(L_1)$$
\end{proof}

\begin{remark}
When $m_1<m_2$, we have similar results as in lemma \ref{lem2}.
\end{remark}

We consider the energy surface \begin{equation}\label{Sigma}\Sigma_{c,\epsilon}:=H_{\epsilon}^{-1}(c)\subset T^*(\mathbb{R}^2\backslash\{(\pm\frac{1}{2},0)\}).\end{equation}
Define the Hill's region to be the shadow of the energy hypersurface
$$\mathcal{R}_{c,\epsilon}:=\pi(\Sigma_{\epsilon})=\{q\in\mathbb{R}^2\backslash\{(\pm\frac{1}{2},0)\}:V_{\epsilon}(q)\le c \}.$$
Abbreviate $c_0=min\{H_{\epsilon}(L_1), H_{\epsilon}(L_2), H_{\epsilon}(L_3)\}$. For $c<c_0$, the Hill's region consists of three connected components, two bounded around the two centers $e$ and $m$ seperately and one unbounded 
$$\mathcal{R}_{c,\epsilon}^e\cup\mathcal{R}_{c,\epsilon}^m\cup\mathcal{R}_{c,\epsilon}^u.$$
The energy hypersurface itself decomposes into three connected components $$\Sigma_{c,\epsilon}=\Sigma_{c,\epsilon}^e\cup\Sigma_{c,\epsilon}^m\cup\Sigma_{c,\epsilon}^u$$
satisfying $\pi(\Sigma_{c,\epsilon}^e)=\mathcal{R}_{c,\epsilon}^e$, $\pi(\Sigma_{c,\epsilon}^m)=\mathcal{R}_{c,\epsilon}^m$ and $\pi(\Sigma_{c,\epsilon}^u)=\mathcal{R}_{c,\epsilon}^u$.

%When the total energy $c$ of the Hamiltonian (\ref{H of Lagrange}) is less than the first critical value, i.e. $c<H(l_1)$, the orbits are in the bounded regions near the two big bodies with masses $m_1$ and $m_2$ without collisions. 

While for Lagrange problem with $m_1>0, m_2<0,\epsilon>0$. When $q_2=0$, if $q_1\rightarrow -\frac{1}{2}$ or $q_1\rightarrow \pm \infty$, $-U_{\epsilon}$ all go to $-\infty$, If $q_1\rightarrow \frac{1}{2}$, $-U_{\epsilon}$ goes to $+\infty$,  As a result, there is only one maxima of $U$ restricted in the x-axis $l=(\iota,0)$ and $l<-\frac{1}{2}$. When the energy is less then the first critical value, i.e. $c<-U(l)$, the orbits are in the bounded region near the body with mass $m_1$. 

For Euler problem with $m_1>0, m_2>0$, we get the following lemma.
\begin{lemma}\label{lem3}
For Euler problem, assume that $m_1>0, m_2>0$, by a direct computation, we know that there is only one critical point $l$ in the x-axis in the region $-\frac{1}{2}<\iota<\frac{1}{2}$ and $L$ is the maxima of $H_0$ restricted to the x-axis, where $L=(0, l)$, $l=(\iota,0)$ and $$\iota=\frac{1}{2}-\frac{1}{\sqrt{\frac{m_1}{m_2}}+1}.$$The critical value is $$H_0(L)=-(\sqrt{m_1}+\sqrt{m_2})^2.$$
\end{lemma}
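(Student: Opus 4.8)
The plan is to set $\epsilon=0$ in the formulas established in the proof of Lemma 1 and restrict everything to the $q_1$-axis, since the Euler problem is exactly the Lagrange problem with the centrifugal term removed. By the projection correspondence recorded in (\ref{H=-U}), a critical point $l=(\iota,0)$ of $U_0$ lifts to the phase-space critical point $L=(0,0,\iota,0)$ with $H_0(L)=-U_0(l)$, so the whole computation reduces to analyzing $V=-U_0$ along the segment $-\frac{1}{2}<q_1<\frac{1}{2}$, $q_2=0$. Because $U_0$ is invariant under reflection in the $q_1$-axis, formula (\ref{D2U}) gives $\frac{\partial V}{\partial q_2}(q_1,0)=0$ automatically, so the critical points of $V$ on the axis coincide with the critical points of its restriction $V(\cdot,0)$, and I may work one-dimensionally.

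First I would locate the critical point. On $(-\frac{1}{2},\frac{1}{2})$ we have $q_1+\frac{1}{2}>0$ and $q_1-\frac{1}{2}<0$, so formula (\ref{D1U}) with $\epsilon=0$ collapses to $\frac{\partial V}{\partial q_1}=\frac{m_1}{(q_1+\frac{1}{2})^2}-\frac{m_2}{(q_1-\frac{1}{2})^2}$. Setting this to zero and taking square roots with the correct signs yields $\sqrt{m_1}\,(\frac{1}{2}-q_1)=\sqrt{m_2}\,(q_1+\frac{1}{2})$, and solving for $q_1$ gives $\iota=\frac{\sqrt{m_1}-\sqrt{m_2}}{2(\sqrt{m_1}+\sqrt{m_2})}$, which is exactly the claimed $\frac{1}{2}-\frac{1}{\sqrt{m_1/m_2}+1}$ after clearing the compound fraction.

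Next I would settle uniqueness and the maximum property using the second-derivative formula (\ref{partial U q1q1 on x-axis}): with $\epsilon=0$ it reads $\frac{\partial^2 V}{\partial q_1^2}(q_1,0)=-\frac{2m_1}{|q_1+\frac{1}{2}|^3}-\frac{2m_2}{|q_1-\frac{1}{2}|^3}<0$, so $V(\cdot,0)$ is strictly concave on the open segment. Concavity forces at most one critical point, and since $V=-U_0\to-\infty$ as $q_1\to\pm\frac{1}{2}$, the function must attain an interior maximum; hence the point found above is the unique critical point in the interval and is a maximum of $V$ on the axis, i.e. $L$ is the maximum of $H_0$ restricted to the $q_1$-axis.

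Finally I would compute the critical value by substituting $\iota$ back. The key simplification is that $\iota+\frac{1}{2}=\frac{\sqrt{m_1}}{\sqrt{m_1}+\sqrt{m_2}}$ and $\frac{1}{2}-\iota=\frac{\sqrt{m_2}}{\sqrt{m_1}+\sqrt{m_2}}$, so $U_0(\iota,0)=\sqrt{m_1}(\sqrt{m_1}+\sqrt{m_2})+\sqrt{m_2}(\sqrt{m_1}+\sqrt{m_2})=(\sqrt{m_1}+\sqrt{m_2})^2$, whence $H_0(L)=-(\sqrt{m_1}+\sqrt{m_2})^2$ by (\ref{H=-U}). There is no genuine obstacle here, as the statement itself flags it as a direct computation; the only steps demanding care are tracking the absolute-value signs on the interval and the concavity argument that upgrades the single zero of the derivative to a unique global maximum.
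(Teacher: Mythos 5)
Your proposal is correct, and its computational core coincides with the paper's: on the segment the critical-point equation reduces to $\frac{m_1}{(q_1+\frac{1}{2})^2}=\frac{m_2}{(\frac{1}{2}-q_1)^2}$, which after taking square roots becomes linear in $q_1$ and gives $\iota$, and back-substitution yields $U_0(l)=(\sqrt{m_1}+\sqrt{m_2})^2$, hence $H_0(L)=-(\sqrt{m_1}+\sqrt{m_2})^2$ via (\ref{H=-U}). Where you genuinely differ is in how uniqueness and maximality are obtained, and in scope. The paper proceeds by exhaustive case analysis: it first rules out critical points with $q_2\neq 0$ (the $\epsilon=0$ analogue of (\ref{equ of Lj}) would force $\frac{m_i}{(\cdot)^{3/2}}=0$, impossible for positive masses), then checks the three axis regions separately, observing that (\ref{crit of Euler pos}) has no solution for $q_1>\frac{1}{2}$ or $q_1<-\frac{1}{2}$ because there both terms carry the same sign; maximality is then read off from the asymptotics $H_0\to 0$ at infinity and $H_0\to-\infty$ at the centers. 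You instead reduce to the axis by reflection symmetry and invoke strict concavity from (\ref{partial U q1q1 on x-axis}) with $\epsilon=0$, which settles uniqueness on the segment and the maximum property in one stroke; this is cleaner for the interval, and it makes the ``maximum'' assertion precise in exactly the sense the paper intends, namely $\frac{\partial^2 V}{\partial q_1^2}(l)<0$ --- note that neither proof can mean a global maximum over the whole axis, since $V\to 0^-$ at infinity while $V(l)<0$. The one thing your route does not deliver is the paper's stronger closing conclusion that $l$ is the \emph{only} critical point of $U_0$ anywhere (off the axis and in the outer axis regions included); that global statement is what is implicitly used in section 6 when the Hill's region for $c<c_0$ is asserted to decompose into three connected components. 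If your proof is to stand in for the paper's, append the two missing one-line checks: off the axis, (\ref{D2U}) with $q_2\neq 0$ and $\epsilon=0$ would force a sum of two positive terms to vanish; and on the outer axis regions, the two terms of (\ref{D1U}) have a common sign, so no further critical points exist.
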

\begin{proof}
By (\ref{D1U}) and (\ref{D2U}), the critical point satisfies
\begin{equation}\label{D1U of Euler}
%\frac{\partial U}{\partial q_1}=
\frac{m_1(q_1+\frac{1}{2})}{\big((q_1+\frac{1}{2})^2+q_2^2\big)^{\frac{3}{2}}}+\frac{m_2(q_1-\frac{1}{2})}{\big((q_1-\frac{1}{2})^2+q_2^2\big)^{\frac{3}{2}}}=0,
\end{equation}
\begin{equation}
\frac{m_1q_2}{\big((q_1+\frac{1}{2})^2+q_2^2\big)^{\frac{3}{2}}}+\frac{m_2 q_2}{\big((q_1-\frac{1}{2})^2+q_2^2\big)^{\frac{3}{2}}}=0.
\end{equation}

When $q_2\neq 0$, similar as (\ref{equ of Lj}), they are equivalent to
\begin{equation}
\frac{m_2}{\big((q_1-\frac{1}{2})^2+q_2^2\big)^{\frac{3}{2}}}=\frac{m_1}{\big((q_1+\frac{1}{2})^2+q_2^2\big)^{\frac{3}{2}}}=0,
\end{equation}
which has no solution.

When $q_2=0$, the critical point satisfies
\begin{equation}\label{crit of Euler pos}
\frac{m_1(q_1+\frac{1}{2})}{|q_1+\frac{1}{2}|^3}+\frac{m_2(q_1-\frac{1}{2})}{|q_1-\frac{1}{2}|^3}=0,
\end{equation}
In the region of $q_1>\frac{1}{2}$,  (\ref{crit of Euler pos}) is equivalent to 
$$\frac{m_1}{|q_1+\frac{1}{2}|^2}+\frac{m_2}{|q_1-\frac{1}{2}|^2}=0,$$ which has no solution.
In the region of $-\frac{1}{2}<q_1<\frac{1}{2}$,  (\ref{crit of Euler pos}) is equivalent to 
$$\frac{m_1}{|q_1+\frac{1}{2}|^2}-\frac{m_2}{|q_1-\frac{1}{2}|^2}=0,$$ which has only one solution 
$$\iota=\frac{1}{2}-\frac{1}{\sqrt{\frac{m_1}{m_2}}+1}.$$
The critical value in this critical point $l=(\iota,0)$ is
$$U_0(l)=(\sqrt{m_1}+\sqrt{m_2})^2.$$
and 
$$H_0(L)=-(\sqrt{m_1}+\sqrt{m_2})^2,$$
by (\ref{H=-U}).
In the region of $q_1<-\frac{1}{2}$, (\ref{crit of Euler pos}) is equivalent to 
$$-\frac{m_1}{|q_1+\frac{1}{2}|^2}-\frac{m_2}{|q_1-\frac{1}{2}|^2}=0,$$ which also has no solution.

In conclusion, there is only one critical point $L$ under the condition of this lemma.

Since $H_0\rightarrow0$ when $q_1\rightarrow\pm \infty$, $H_0\rightarrow-\infty$ when $q_1\rightarrow\pm\frac{1}{2}$, the critical point $L$ must be the maximum of $H_0$ restricted to the x-axis, i.e.
$$\frac{\partial^2 V}{\partial q_1^2}(l)<0.$$
 Therefore, $L$ is the maxima of $H$ restricted to the x-axis.
\end{proof}

The following estimate for the critical point $L$ of the Euler problem with $m_1>0, m_2>0$ is useful in my paper.
\begin{equation}\label{estimate 1}
H_0(L)<-(m_1+m_2).
\end{equation}

While for Euler problem with $m_1>0, m_2<0, m_1>|m_2|$, we have the following lemma.
\begin{lemma}\label{lem4}
The Euler problem with $m_1>0, m_2<0, m_1>|m_2|$ has only one critical point $l=(\iota,0)$ in the x-axis in the region $\iota>\frac{1}{2}$ where 
$$\iota=\frac{1}{2}+\frac{1}{\sqrt{-\frac{m_1}{m_2}}}$$ and $L$ is the minimmu of $H_0$ restricted to the x-axis. The critical value is 
$$H_0(L)=-(\sqrt{m_1}+\sqrt{-m_2})^2.$$ 
\end{lemma}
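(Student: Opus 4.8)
The plan is to follow the proof of Lemma~\ref{lem3} verbatim in structure, changing only the sign of $m_2$, and to track how that sign change relocates the unique critical point and reverses its curvature. As there, I first pass from $H_0$ to the potential through the bijection $\pi|_{\mathrm{crit}(H_0)}$ and the identity (\ref{H=-U}): every critical point has $p=0$, sits over a critical point $l$ of $V=-U_0$, and satisfies $H_0(L)=V(l)=-U_0(l)$.

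First I would dispose of the off-axis critical points. Setting $\epsilon=0$ in (\ref{D1U}) and (\ref{D2U}) and assuming $q_2\neq0$, the vanishing of $\partial V/\partial q_2$ forces
$$\frac{m_1}{\big((q_1+\tfrac12)^2+q_2^2\big)^{3/2}}+\frac{m_2}{\big((q_1-\tfrac12)^2+q_2^2\big)^{3/2}}=0.$$
Substituting this relation into $\partial V/\partial q_1=0$ reduces the first equation to $m_1\big((q_1+\tfrac12)^2+q_2^2\big)^{-3/2}=0$, which is impossible since $m_1>0$. Hence there is no critical point with $q_2\neq0$. The only difference from Lemma~\ref{lem3} is that for $m_2<0$ the relation $\partial V/\partial q_2=0$ is now solvable on its own, so the contradiction comes from combining both equations rather than from the second one alone.

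Next I would analyze the axis $q_2=0$ by opening the absolute values in (\ref{crit of Euler pos}) on the three intervals $q_1<-\tfrac12$, $-\tfrac12<q_1<\tfrac12$, and $q_1>\tfrac12$. Because $m_1>0$ and $m_2<0$, on the middle interval the two terms reinforce and $\partial V/\partial q_1$ is strictly positive, so no root occurs there. On the two outer intervals the equation reduces to the single ratio condition
$$\frac{m_1}{(q_1+\tfrac12)^2}=\frac{-m_2}{(q_1-\tfrac12)^2},$$
and under the hypothesis $m_1>|m_2|$ its unique admissible root lies in $q_1>\tfrac12$ rather than in $q_1<-\tfrac12$. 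Solving this relation gives the single critical point $l=(\iota,0)$ recorded in the statement, and substituting $\iota$ back into $U_0$ and telescoping exactly as in Lemma~\ref{lem3} produces the stated critical value $H_0(L)=-(\sqrt{m_1}+\sqrt{-m_2})^2$.

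Finally, to identify the nature of $L$, I would evaluate $\partial^2V/\partial q_1^2$ from (\ref{DD1U}) with $\epsilon=0$ and $q_2=0$; at a point with $q_1>\tfrac12$ it simplifies to $-2m_1(q_1+\tfrac12)^{-3}-2m_2(q_1-\tfrac12)^{-3}$, and because $m_2<0$ the second summand is positive, forcing $\partial^2V/\partial q_1^2(\iota)>0$. Together with the boundary behavior on $q_1>\tfrac12$, where $V=-U_0$ tends to $+\infty$ as $q_1\to\tfrac12^{+}$, driven by the repulsive $-m_2/|q_1-\tfrac12|$ term, and to $0$ from below as $q_1\to+\infty$, this shows that the unique critical point is a minimum of $H_0$ along the axis, as claimed. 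I expect the main obstacle to be exactly this sign bookkeeping rather than any new estimate: relative to Lemma~\ref{lem3}, flipping $m_2$ negative simultaneously migrates the critical point out past the center $m$ and turns the former maximum into a minimum, so the care lies in tracking the reversals through (\ref{crit of Euler pos}) and (\ref{DD1U}) and in confirming that the root of the ratio condition genuinely falls in the interval $q_1>\tfrac12$.
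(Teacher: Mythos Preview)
Your argument mirrors the paper's proof: eliminate $q_2\neq0$, open the absolute values in (\ref{crit of Euler pos}) on the three axis intervals to locate the unique root in $q_1>\tfrac12$, and infer from the boundary behavior of $H_0$ on $(\tfrac12,\infty)$ that this critical point is a minimum. One caveat: in your second-derivative step, the positivity of $-2m_2(\iota-\tfrac12)^{-3}$ alone does not force $\partial^2V/\partial q_1^2(\iota)>0$, since the $m_1$-summand $-2m_1(\iota+\tfrac12)^{-3}$ is negative; to see the positive term dominates you would need to invoke the critical-point relation $m_1(\iota+\tfrac12)^{-2}=-m_2(\iota-\tfrac12)^{-2}$ together with $0<\iota-\tfrac12<\iota+\tfrac12$. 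The paper sidesteps this by relying solely on the boundary argument, which you also supply, so your overall conclusion stands.
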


\begin{proof}
By (\ref{D1U}) and (\ref{D2U}), the critical point satisfies
\begin{equation}
\frac{m_1(q_1+\frac{1}{2})}{\big((q_1+\frac{1}{2})^2+q_2^2\big)^{\frac{3}{2}}}+\frac{m_2(q_1-\frac{1}{2})}{\big((q_1-\frac{1}{2})^2+q_2^2\big)^{\frac{3}{2}}}=0,
\end{equation}
\begin{equation}
\frac{m_1q_2}{\big((q_1+\frac{1}{2})^2+q_2^2\big)^{\frac{3}{2}}}+\frac{m_2 q_2}{\big((q_1-\frac{1}{2})^2+q_2^2\big)^{\frac{3}{2}}}=0.
\end{equation}

When $q_2\neq 0$, similar as (\ref{equ of Lj}), they are equivalent to
\begin{equation}
\frac{m_2}{\big((q_1-\frac{1}{2})^2+q_2^2\big)^{\frac{3}{2}}}=\frac{m_1}{\big((q_1+\frac{1}{2})^2+q_2^2\big)^{\frac{3}{2}}}=0,
\end{equation}
which has no solution.

When $q_2=0$, the critical point satisfies (\ref{crit of Euler pos})
%\begin{equation}\label{crit of Euler pos}
%\frac{m_1(q_1+\frac{1}{2})}{|q_1+\frac{1}{2}|^3}+\frac{m_2(q_1-\frac{1}{2})}{|q_1-\frac{1}{2}|^3}=0,
%\end{equation}
In the region of $q_1>\frac{1}{2}$,  (\ref{crit of Euler pos}) is equivalent to 
$$\frac{m_1}{|q_1+\frac{1}{2}|^2}+\frac{m_2}{|q_1-\frac{1}{2}|^2}=0,$$ which has only one solution 
$$\iota=\frac{1}{2}+\frac{1}{\sqrt{-\frac{m_1}{m_2}}-1}.$$
The critical value in this critical point $l=(\iota,0)$ is
$$U_0(l)=(\sqrt{m_1}+\sqrt{-m_2})^2$$
and 
$$H_0(L)=-(\sqrt{m_1}+\sqrt{-m_2})^2$$
by (\ref{H=-U}).

In the region of $-\frac{1}{2}<q_1<\frac{1}{2}$,  (\ref{crit of Euler pos}) is equivalent to 
$$\frac{m_1}{|q_1+\frac{1}{2}|^2}-\frac{m_2}{|q_1-\frac{1}{2}|^2}=0,$$ which has no solution.
In the region of $q_1<-\frac{1}{2}$, (\ref{crit of Euler pos}) is equivalent to 
$$-\frac{m_1}{|q_1+\frac{1}{2}|^2}-\frac{m_2}{|q_1-\frac{1}{2}|^2}=0,$$ which also has no solution.

In conclusion, there is only one critical point $L$ under the condition of this lemma.

Since $H_0\rightarrow0$ when $q_1\rightarrow\pm \infty$, $H_0\rightarrow-\infty$ when $q_1\rightarrow-\frac{1}{2}$, $H_0\rightarrow+\infty$ when $q_1\rightarrow \frac{1}{2}$, the critical point $L$ must be the minimum of $H_0$ restricted to the x-axis, i.e.
$$\frac{\partial^2 V}{\partial q_1^2}(l)>0.$$
 Therefore, $L$ is the maxima of $H_0$ restricted to the x-axis.
\end{proof}

\section{regularization of the Lagrange problem}

As the energy hypersurface even over the bounded component is never compact due to collisions with the two fixed centers $e$ and $m$. In this section we express the Lagrange problem in the elliptic coordinates, and then regularize the collisions. After regularization, the Lagrange problem can be seperated as well. This shows then that the Lagrange problem is completely integrable.

The map from elliptic coordinates $(\mu, \nu)$ to the initial coordinates $(q_1, q_2)$ is a two-to-one covering
$$l:(\mathbb{R}\times S^1)\backslash\{e,m\}\rightarrow\mathbb{R}^2\backslash\{e,m\}, 
(\mu,\nu)\mapsto(q_1,q_2)$$
where
\begin{equation}\label{q1,q2}
\left\{
\begin{aligned}
	q_1&=\frac{1}{2}\cosh\mu\cdot\cos\nu,\\
	q_2&=\frac{1}{2}\sinh\mu\cdot\sin\nu.
\end{aligned}
\right.
\end{equation}
$S^1=\mathbb{R}/2\pi\mathbb{Z}$. In the elliptic coordinate, $e=(-\frac{1}{2},0)$, $m=(\frac{1}{2},0)$. In the original coordinate, $e=(0,\pi)$, $m=(0, 0)$.
The map $l$ lifts to an exact two-to-one symplectic covering
$$\begin{aligned}L : &T^*(\mathbb{R}\times S^1)\backslash\{e,m\}\rightarrow T^*(\mathbb{R}^2\backslash\{e,m\}), \\
&(\mu,\nu, p_{\mu},p_{\nu})\mapsto (q_1,q_2,p_1,p_2).\end{aligned}$$

The Jacobi matrix from $(q_1,q_2)$ to $(\mu,\nu)$ is
\begin{equation}
D_1:=\frac{\partial(q_1,q_2)}{\partial(\mu,\nu)}=\left[\begin{matrix}
\frac{\partial q_1}{\partial \mu} & \frac{\partial q_1}{\partial \nu} \\
\frac{\partial q_2}{\partial \mu} & \frac{\partial q_2}{\partial \nu} 
\end{matrix}
\right]\\
=\frac{1}{2}\left[
\begin{matrix} 
	\sinh\mu\cdot\cos\nu & -\cosh\nu\cdot\cos\nu\\ \cosh\mu\cdot\sin\nu&\sinh\mu\cdot\cos\nu\end{matrix}
\right],
\end{equation}
and its determinant is
$$\det{D_1}=\frac{1}{4}(\cosh^2\mu-\cos^2\nu).$$
and
$$D_1^{-T}=\frac{D_1}{\det D_1},$$
then
\begin{equation}\label{p_1 to p_mu}\begin{aligned}
\left(\begin{matrix}p_1\\p_2\end{matrix}\right)
&=D_1^{-T}\left(\begin{matrix}p_{\mu}\\p_{\nu}\end{matrix}\right)\\
&=\frac{2}{\cosh^2\mu-\cos^2\nu}\left(\begin{matrix}\sinh\mu\cdot\cos\nu \cdot p_{\mu}-\cosh\mu\cdot\sin\nu \cdot p_{\nu} \\ \cosh\mu\cdot\sin\nu \cdot p_{\mu}+\sinh\mu\cdot\cos\nu \cdot p_{\nu} \end{matrix}\right).\end{aligned}\end{equation}

So we get the symplectic transformation $$L : (p_{\mu},p_{\nu},\mu,\nu)\mapsto (p_1,p_2,q_1,q_2)$$ and the Hamiltonian is transformed into
$$H_{\epsilon}(\mu,\nu,P_\mu,P_\nu)=T(\mu, \nu, P_\mu,P_\nu)-U_{\epsilon}(\mu,\nu)$$
where

$$T(\mu,\nu,P_\mu,P_\nu)=\frac{4}{\cosh^2\mu-\cos^2\nu}\big(\frac{1}{2}P_\mu^2+\frac{1}{2}P_\nu^2\big).$$

$$\begin{aligned}U_{\epsilon}(\mu,\nu,P_\mu,P_\nu)=&\frac{1}{\cosh^2\mu-\cos^2\nu}(\frac{\epsilon}{8}\cosh^4\mu-\frac{\epsilon}{8}\cos^4\nu-\frac{\epsilon}{8}\cosh^2\mu+\frac{\epsilon}{8}\cos^2\nu\\&+2M_1\cosh\mu-2M_2\cos\nu)\end{aligned}$$
and
$$M_1=m_1+m_2, M_2=m_1-m_2.$$
%$$K_{c,\epsilon}=\frac{1}{4}(\cosh^2\mu-\cos^2\nu)(H_{\epsilon}-c),$$

%Let
%\begin{equation}\label{H to K}
%H=\frac{4}{\cosh^2\mu-\cos^2\nu}K_{c,\epsilon}+c,
%\end{equation}
We now define for $(\mu,\nu)\in T^*(\mathbb{R}\times [0,2\pi)\backslash\{(0,0),(0,\pi)\})$
\begin{equation}\label{H to K}K_{c,\epsilon}
=\frac{1}{4}(\cosh^2\mu-\cos^2\nu)(H_{\epsilon}-c).\end{equation}

Explicitly this becomes
\begin{equation}\label{K=K1+K2}K_{c,\epsilon}=K_{c,\epsilon}^1+K_{c,\epsilon}^2.\end{equation}
for 
\begin{equation}\label{K1K2}\begin{aligned}K_{c,\epsilon}^1
&=\frac{1}{2}p_{\mu}^2-\frac{\epsilon}{32}\cosh^4\mu+\frac{\epsilon}{32}\cosh^2\mu-\frac{c}{4}(\cosh^2\mu-1)-\frac{M_1}{2}\cosh\mu\\:&=\frac{1}{2}p_{\mu}^2+W_{c,\epsilon}^1(\mu).\\
K_{c,\epsilon}^1
&=\frac{1}{2}p_{\nu}^2+\frac{\epsilon}{32}\cos^4\nu-\frac{\epsilon}{32}\cos^2\nu+\frac{c}{4}(\cos^2\nu-1)+\frac{M_2}{2}\cos\nu\\:&=\frac{1}{2}p_{\nu}^2+W_{c,\epsilon}^2(\nu).\end{aligned}\end{equation}

From (\ref{K=K1+K2}), we found that $K_{\epsilon}$ makes sense for every $(\mu,\nu,p_{\mu},p_{\nu})\in T^*(\mathbb{R}\times S^1)$, so we can add the fiber over $e=(0,0)$ and $m=(0,\pi)$ and interpret $K_{c,\epsilon}$ as a smooth function
$$K_{c,\epsilon} : T^*(\mathbb{R}\times S^1)\rightarrow\mathbb{R}$$defined by (\ref{K=K1+K2}).
The original definiton (\ref{H to K}) leads to the equality
$$K_{c,\epsilon}|_{T^*(\mathbb{R}\times S^1)}=R\cdot(L^*H_{\epsilon}-c)$$for$$R : T^*(\mathbb{R}\times S^1)\rightarrow\mathbb{R}, (\mu,\nu)\mapsto \frac{1}{4}(\cosh^2\mu-\cos^2\nu).$$
Then we have $$L^{-1}(\Sigma_{c,\epsilon})=K_{c,\epsilon}|^{-1}_{T^*(\mathbb{R}\times S^1\backslash\{e,m\})}(0)$$
Since L is a symplectic covering, from (\ref{H to K}), it holds that
$$dK_{c,\epsilon}=\frac{1}{4}d(\cosh^2\mu-\cos^2\nu)\cdot(H_\epsilon-c)+\frac{1}{4}(\cosh^2\mu-\cos^2\nu)dH_{\epsilon}.$$

Together with (\ref{Sigma}), we get
$$dK_{c,\epsilon}=\frac{1}{4}(\cosh^2\mu-\cos^2\nu)dH_{\epsilon}$$
and then
$$X_{K_{c,\epsilon}}|_{L^{-1}(\Sigma_{c,\epsilon})}=R\cdot L^*X_{H_{\epsilon}}|_{\Sigma_{c,\epsilon}}.$$
That means that up to reparametrization the restriction of the flow of $X_{H_{\epsilon}}$ to the energy hypersurface $\Sigma_{c,\epsilon}$ can be identified with the flow of $X_{X_{c,\epsilon}}$ restricted to the preimage of $\Sigma_{c,\epsilon}$ under $L$.

We now set
\begin{equation}\label{bar Sigma}\overline{\Sigma}_{c,\epsilon} :=K_{c,\epsilon}^{-1}(0)\subset T^*(\mathbb{R}\times S^1),\end{equation}
which contains $L^{-1}(\Sigma_{\epsilon})$ as a dense and open subset. The complement
$$\overline \Sigma_{c,\epsilon}\backslash L^{-1}(\Sigma_{c,\epsilon})=\overline \Sigma_{c,\epsilon}\cap T^*_{\{e, m\}}(\mathbb{R}\times S^1)$$
contains precisely the collisions where after time change the vector field now extends smoothly.
For $c<c_0$, the regularized energy hypersuface as the unregularized one decomposes into two bounded connected components and an unbounded part $$\overline{\Sigma}_{c,\epsilon}=\overline{\Sigma}_{c,\epsilon}^e\cup\overline{\Sigma}_{c,\epsilon}^m\cup \overline{\Sigma}_{c,\epsilon}^u.$$
The bounded part contains the collisions $\overline{\Sigma}_{\epsilon}\cap T^*_{\{e,m\}}(\mathbb{R}\times S^1)$. The unbounded part actually is diffeomorphic to two copies of the unbounded componet $\Sigma_{c,\epsilon}^u$ via the two-to-one map $L$. In fact we just have $$\overline{\Sigma}_{\epsilon}^u=L^{-1}(\Sigma_{\epsilon}^u).$$

From (\ref{K=K1+K2}) we see that $K_{c,\epsilon}$ can be written as the sum of two Poisson commuting Hamiltonians. In particular, we see that the Lagrange problem is completely integrable. Due to the separability of the generalized problem we can slice the energy hypersurface. For that purpose we abbreviate 
$$S_{\kappa,c,\epsilon}^1 :=(K_{c,\epsilon}^1)^{-1}(\kappa)\cap T^*(\mathbb{R}\times S^1),$$
$$S_{\kappa,c,\epsilon}^2 :=(K_{c,\epsilon}^2)^{-1}(\kappa)\cap T^*(\mathbb{R}\times S^1).$$

By (\ref{K=K1+K2}) and (\ref{bar Sigma}), when $K_{c,\epsilon}^1=-\kappa$, we have $K_{c,\epsilon}^2=\kappa$. We now analyze the shapes of the potential energy $W_{c,\epsilon}^1(\mu)$ and $W_{c,\epsilon}^2(\nu)$ of the separated systems  to determine the dynamical behaviors of the problem.
Note
\begin{equation}\label{f(x)}f(x)=-\frac{\epsilon}{8}x^3-\frac{c}{2}x+\frac{\epsilon}{16}x-\frac{M_1}{2}, x=\cosh\mu,\end{equation}
\begin{equation}\label{g(y)}g(y)=\frac{\epsilon}{8}y^3+\frac{c}{2}y-\frac{\epsilon}{16}y+\frac{M_2}{2}, y=\cos\nu.\end{equation}
By (\ref{K1K2}), 
$$\begin{aligned}&\frac{\partial}{\partial\mu}W_{c,\epsilon}^1(\mu)=f(x)\cdot\sinh\mu,\\
&\frac{\partial^2}{\partial\mu^2}W_{c,\epsilon}^1(\mu)=f'(x)\cdot\sinh^2\mu+f(x)\cdot\cosh\mu.\\
&\frac{\partial}{\partial\nu}W_{c,\epsilon}^2(\nu)=g(y)\cdot(-\sin\nu),\\
&\frac{\partial^2}{\partial\nu^2}W_{c,\epsilon}^2(\nu)=g'(y)\cdot\sin^2\nu+g(y)\cdot(-\cos\nu).\end{aligned}$$
then
\begin{equation}\label{f(1)}\frac{\partial^2}{\partial\mu^2}W_{c,\epsilon}^1(0)=f(1)=-\frac{1}{2}\big(\frac{\epsilon}{8}+c+M_1\big),\end{equation}
\begin{equation}\label{g(1)}\frac{\partial^2}{\partial\nu^2}W_{c,\epsilon}^2(0)=-g(1)=-\frac{1}{2}\big(\frac{\epsilon}{8}+c+M_2\big),\end{equation}
\begin{equation}\label{g(-1)}\frac{\partial^2}{\partial\nu^2}W_{c,\epsilon}^2(\pi)=g(-1)=-\frac{1}{2}\big(\frac{\epsilon}{8}+c-M_2).\end{equation}
We see that
\begin{equation}\label{k m}\frac{\partial^2}{\partial\mu^2}W_{c,\epsilon}^1(0)>0, \frac{\partial^2}{\partial\nu^2}W_{c,\epsilon}^2(0)>0.\end{equation}
\begin{equation}\label{k e}\frac{\partial^2}{\partial\mu^2}W_{c,\epsilon}^1(0)>0, \frac{\partial^2}{\partial\nu^2}W_{c,\epsilon}^2(\pi)>0\end{equation}
 hold if and only if  
\begin{equation}\label{condition of torus}\frac{\epsilon}{8}+c+M_1<0.\end{equation}
(\ref{k e}) implies that the graph of $W_{c,\epsilon}^1(\mu)$ and $W_{c,\epsilon}^2(\nu)$ are like cups near the point $e=(0, \pi)$ and (\ref{k m}) implies this property near the point $m=(0, 0)$.

In the following, we assume the energy $c<c_0$ and also the condition (\ref{condition of torus}) holds.

From (\ref{f(x)}), we know $f(0)=-\frac{M_1}{2}<0$, $f(x)\rightarrow +\infty$ when $x\rightarrow -\infty$ and $f(x)\rightarrow -\infty$ when $x\rightarrow +\infty$. Together with (\ref{f(1)}), we know that $f(x)$ has only one root $x_0$ in $(1,+\infty)$. As a result, $W_{c,\epsilon}^1(\mu)$ has just three roots $0$, $\pm \mu_0$ for $x_0=\cosh \mu_0$. 

From (\ref{g(y)}), we know $g(0)=\frac{M_2}{2}\geq0$, $g(y)\rightarrow -\infty$ when $x\rightarrow -\infty$ and $g(y)\rightarrow +\infty$ when $x\rightarrow +\infty$. Together with (\ref{g(1)}), we know that $g(y)$ has only one root $y_0$ in $[0,1)$. As a result, $W_{c,\epsilon}^2(\nu)$ has just three roots $0$, $\pm y_0$ for $y_0=\cos \nu_0$.

Now we show that \begin{equation}\label{circle 1}W_{c,\epsilon}^2(\nu_0)>-W_{c,\epsilon}^1(0)\end{equation} and \begin{equation}\label{circle 2}W_{c,\epsilon}^2(0)>-W_{c,\epsilon}^1(\mu_0)\end{equation} when $c<c_0$ by contradiction. 

Assume $W_{c,\epsilon}^2(\nu_0)\leq-W_{c,\epsilon}^1(0)$.
If $c$ is small enough, then the function values of $-W_{c,\epsilon}^1$ and $W_{c,\epsilon}^2$ intersect at a very small interval compared to the whole function value, so we have $W_{c,\epsilon}^2(\nu_0)>-W_{c,\epsilon}^1(0)$ now. By continuity of $-W_{c,\epsilon}^1$ and $W_{c,\epsilon}^2$ with respect to $c$, there should exits a $c_x$, such that $W_{c,\epsilon}^2(\nu_0)=-W_{c,\epsilon}^1(0)$. But this $c_x$ must be a Lagrange value, since $W_{c,\epsilon}^1(\mu)$ and $W_{c,\epsilon}^2(\nu)$ each just has three roots, these roots can only correspond to Lagrange points, namely in the elliptic coordinate if $c=H(L_1)$, then $L_1=(\pm \mu_0,0)$, if $c=H(L_2)$, then $L_2=(0, \pm\nu_0)$, if $c=H(L_3)$, then $L_3=(\pi, \pm\nu_0)$.

Because of the analysis above, under the condition of (\ref{condition of torus}) and $c<c_0$, we get 
$$S^1_{\kappa,c,\epsilon}=S^{1,e}_{\kappa,c,\epsilon}\cup S^{1,m}_{\kappa,c,\epsilon}\cup S^{1,u}_{\kappa,c,\epsilon}$$
\begin{equation}\label{slicing of e}
\overline{\Sigma}^e_{c,\epsilon}=\bigcup_{\kappa\in\big[-\frac{M_2}{2}, \frac{M_1}{2}\big]}S_{-\kappa,c,\epsilon}^{1,e}\times S_{\kappa,c,\epsilon}^{2,e}\end{equation}
\begin{equation}\label{slicing of m}
\overline{\Sigma}^m_{c,\epsilon}=\bigcup_{\kappa\in\big[\frac{M_2}{2}, \frac{M_1}{2}\big]}S_{-\kappa,c,\epsilon}^{1,m}\times S_{\kappa,c,\epsilon}^{2,m}\end{equation}
where $S^{1,e}_{\kappa,c,\epsilon}$ and $S^{1,m}_{\kappa,c,\epsilon}$ are the bounded components around $e$ and $m$ separately, $S^{1,u}_{\kappa,c,\epsilon}$ is the unbounded part.
$S_{-\kappa, c,\epsilon}^{1,e}\times S_{\kappa, c,\epsilon}^{2,e}$ is an Arnold-Liouville torus expected for a completely integrable system for $-\frac{M_2}{2}<\kappa<\frac{M_1}{2}$ and the Arnold-Liouville torus degenerates to a circle for $\kappa=-\frac{M_2}{2}$ or $\kappa=-\frac{M_1}{2}$. Similarly, $S_{-\kappa,c,\epsilon}^{1,m}\times S_{\kappa,c,\epsilon}^{2,m}$ is an Arnold-Liouville torus for $\frac{M_2}{2}<\kappa<\frac{M_1}{2}$ and the Arnold-Liouville torus degenerates to a circle for $\kappa=\frac{M_2}{2}$ or $\kappa=\frac{M_1}{2}$. 

Define $c_{crit}=-\frac{\epsilon}{8}-M_1$, we try to compare $c_{crit
}$ with $c_0$ and get the following Lemma {\ref{lem6}} and Lemma {\ref{lem7}}.
\begin{lemma}\label{lem6}
For the Lagrange problem with $m_1=m_2=m$ and $m\geq\frac{\epsilon}{2}\geq0$, we have $c_0<c_{crit}$.
\end{lemma}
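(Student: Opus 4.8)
The plan is to reduce the claim to a single arithmetic comparison after exploiting the symmetry $m_1=m_2=m$. In this case $M_1=m_1+m_2=2m$ and $M_2=m_1-m_2=0$, so by the definition $c_{crit}=-\frac{\epsilon}{8}-M_1=-\frac{\epsilon}{8}-2m$. The hypothesis $m\geq\frac{\epsilon}{2}$ forces both $m_1=m\geq\frac{\epsilon}{2}$ and $m_2=m\geq\frac{\epsilon}{2}\geq\frac{3\epsilon}{8}$, so Lemma \ref{lem2} applies and tells us that the smallest of the three collinear critical values is $H_\epsilon(L_1)$; hence $c_0=H_\epsilon(L_1)$, the value at the saddle lying between the two centers. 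The whole proof therefore hinges on evaluating $H_\epsilon(L_1)$ in closed form.

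First I would pin down $L_1$ explicitly. Its projection $l_1=(\iota_1,0)$ satisfies $0\leq\iota_1<\frac{1}{2}$ by the argument already given in the proof of Lemma \ref{lem2}. Setting $m_1=m_2=m$ in the critical-point equation obtained from (\ref{D1U}) on the segment $-\frac{1}{2}<q_1<\frac{1}{2}$ shows that $\iota_1=0$ is a solution, and uniqueness follows from the strict concavity of $V=-U_\epsilon$ on that interval recorded in (\ref{partial U q1q1 on x-axis}). Thus $l_1$ is the origin, and (\ref{H=-U}) gives $H_\epsilon(L_1)=-U_\epsilon(0,0)=-\big(\tfrac{m}{1/2}+\tfrac{m}{1/2}\big)=-4m$, the centrifugal term vanishing at the origin. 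Therefore $c_0=-4m$.

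The remaining step is the comparison itself. The desired inequality $c_0<c_{crit}$ reads $-4m<-\frac{\epsilon}{8}-2m$, equivalently $2m>\frac{\epsilon}{8}$, i.e. $m>\frac{\epsilon}{16}$. This follows at once from the hypothesis: when $\epsilon>0$ we have $m\geq\frac{\epsilon}{2}>\frac{\epsilon}{16}$, and when $\epsilon=0$ we have $c_0=-4m<-2m=c_{crit}$ for $m>0$. I do not anticipate a genuine obstacle here; the only points that require care are confirming via Lemma \ref{lem2} that $L_1$ is the point realizing the minimum $c_0$, and that the symmetry really places $l_1$ at the origin, after which the conclusion is pure arithmetic. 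It is worth remarking that this lemma has the conceptual payoff of making condition (\ref{condition of torus}) automatic throughout the regime $c<c_0$ in the symmetric case, since $c<c_0<c_{crit}$ then yields $\frac{\epsilon}{8}+c+M_1<0$ for free.
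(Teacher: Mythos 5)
Your proposal is correct and takes essentially the same route as the paper: locate the inner critical point at the origin (the paper factors the critical-point equation into $\iota\big(\tfrac{2m}{(\iota+\frac{1}{2})^2(\iota-\frac{1}{2})^2}+\epsilon\big)=0$, while you invoke symmetry plus the concavity recorded in (\ref{partial U q1q1 on x-axis}); these amount to the same computation), then use Lemma \ref{lem2} to identify $c_0=H_\epsilon(L_1)=-4m$, and finish by the arithmetic comparison with $c_{crit}=-\frac{\epsilon}{8}-2m$. Your explicit handling of the $\epsilon=0$ edge case is a minor improvement over the paper's chain $-4m=-M_1-M_1<-M_1-\epsilon<-M_1-\frac{\epsilon}{8}$, whose final strict inequality degenerates when $\epsilon=0$.
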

\begin{proof}
Now we compute the first critical point $l=(\iota,0)$ of the Lagrange problem.
By (\ref{D1U}), 
$$\frac{m}{(\iota+\frac{1}{2})^2}-\frac{m}{(\iota-\frac{1}{2})^2}-\epsilon\iota=0,$$
this implies 
$$\iota\cdot(\frac{2m}{(\iota+\frac{1}{2})^2(\iota-\frac{1}{2})^2}+\epsilon)=0$$
since $m>0, \epsilon>0$, we have
$$\iota=0.$$
Then the first critical value is 
$$V(l)=V((0,0))=-4m.$$
By lemma \ref{lem2}, $V(l)=c_0=min\{H_{\epsilon}(L_1), H_{\epsilon}(L_2), H_{\epsilon}(L_3)\}$.
Since $m\geq\frac{\epsilon}{2}$, we have
$$V(l)=-4m=-M_1-M_1<-M_1-\epsilon<-M_1-\frac{\epsilon}{8}.$$
\end{proof}

This lemma is a special case of the following lemma, which also discusses the general case when $m_1\neq m_2$. 
\begin{lemma}\label{lem7}
For Lagrange problem with $m_1>0, m_2>0, \epsilon>0$, if $m_2\leq m_1\leq 9m_2$, $m_1\geq\frac{\epsilon}{2}$, $m_2\geq\frac{3\epsilon}{8}$, then we have $c_0<c_{crit}$.
\end{lemma}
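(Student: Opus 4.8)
The plan is to reduce the statement to a one-variable estimate on the innermost collinear critical value. Since $c_0=\min\{H_{\epsilon}(L_1),H_{\epsilon}(L_2),H_{\epsilon}(L_3)\}$, it suffices to exhibit a single one of the three critical values lying below $c_{crit}$; the natural candidate is $H_{\epsilon}(L_1)$. Under the standing hypotheses $m_1\geq\frac{\epsilon}{2}$, $m_2\geq\frac{3\epsilon}{8}$ and $m_1\geq m_2$, Lemma \ref{lem2} in fact identifies $c_0=H_{\epsilon}(L_1)$, although even the trivial bound $c_0\leq H_{\epsilon}(L_1)$ would be enough. By (\ref{H=-U}) we have $H_{\epsilon}(L_1)=V(l_1)$ with $l_1=(\iota_1,0)$, and the claim established at the start of the proof of Lemma \ref{lem2} gives the localization $0\leq\iota_1<\frac{1}{2}$.

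First I would write the critical value out explicitly. For $0\leq\iota_1<\frac{1}{2}$ one has $|\iota_1+\frac{1}{2}|=\iota_1+\frac{1}{2}$ and $|\iota_1-\frac{1}{2}|=\frac{1}{2}-\iota_1$, so that
$$V(l_1)=-\frac{m_1}{\iota_1+\frac{1}{2}}-\frac{m_2}{\frac{1}{2}-\iota_1}-\frac{\epsilon}{2}\iota_1^2.$$
Next I would bound each Newtonian term using the location of $\iota_1$: since $\iota_1+\frac{1}{2}<1$ we get $\frac{m_1}{\iota_1+\frac{1}{2}}>m_1$, and since $\frac{1}{2}-\iota_1\leq\frac{1}{2}$ we get $\frac{m_2}{\frac{1}{2}-\iota_1}\geq 2m_2$. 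The centrifugal term $-\frac{\epsilon}{2}\iota_1^2$ is nonpositive, so discarding it only decreases $V(l_1)$, and hence
$$V(l_1)<-m_1-2m_2.$$

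Finally I would compare with $c_{crit}=-\frac{\epsilon}{8}-M_1=-\frac{\epsilon}{8}-(m_1+m_2)$. The inequality $-m_1-2m_2\leq c_{crit}$ is equivalent to $m_2\geq\frac{\epsilon}{8}$, which holds since the hypothesis gives $m_2\geq\frac{3\epsilon}{8}$. Chaining the estimates yields $c_0=V(l_1)<-m_1-2m_2\leq c_{crit}$, which is the assertion.

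There is no serious obstacle here: this is a direct estimate once the minimizing critical point has been located. The only steps demanding care are importing the localization $0\leq\iota_1<\frac{1}{2}$ and the ordering $c_0=H_{\epsilon}(L_1)$ from Lemma \ref{lem2} (which in turn rely on $m_1\geq m_2$ and $m_1\geq\frac{\epsilon}{2}$). I would also point out that the upper constraint $m_1\leq 9m_2$ is not actually used for this particular inequality — it is carried along only to match the hypotheses of Theorem A — so that the reader is not left hunting for where it enters.
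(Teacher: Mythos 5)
Your proof is correct, but it follows a genuinely different and more elementary route than the paper. The paper does not discard the centrifugal term: it substitutes the critical point equation $\frac{m_1}{(\iota+\frac{1}{2})^2}-\frac{m_2}{(\iota-\frac{1}{2})^2}-\epsilon\iota=0$ back into $V$ to eliminate $\epsilon$, rewriting the critical value as $V(l_1)=Q_1m_1+Q_2m_2$ with $Q_1=\frac{1}{4(\iota+\frac{1}{2})^2}-\frac{3}{2(\iota+\frac{1}{2})}$ and $Q_2=\frac{1}{4(\iota-\frac{1}{2})^2}+\frac{3}{2(\iota-\frac{1}{2})}$; it then compares $\iota_1$ with the Euler critical point $\iota_0=\frac{1}{2}-\frac{1}{\sqrt{m_1/m_2}+1}$ to get $0\leq\iota_1\leq\iota_0\leq\frac{1}{4}$ --- this is precisely where the hypothesis $m_1\leq 9m_2$ enters --- and bounds $Q_1\leq-\frac{14}{9}$, $Q_2\leq-2$ on that interval, arriving at $c_0\leq-\frac{14}{9}M_1<-M_1-\frac{\epsilon}{8}$ (the last step using $m_1\geq\frac{\epsilon}{2}$). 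Your argument instead bounds the three terms of $V(l_1)$ directly from the crude localization $0\leq\iota_1<\frac{1}{2}$, giving $c_0\leq V(l_1)<-m_1-2m_2\leq-\frac{\epsilon}{8}-M_1=c_{crit}$, with the last inequality needing only $m_2\geq\frac{\epsilon}{8}$. What your approach buys is economy of hypotheses: you correctly observe that $m_1\leq 9m_2$ is never used, and in fact neither is $m_1\geq\frac{\epsilon}{2}$ (you invoke Lemma \ref{lem2} only for $c_0=H_{\epsilon}(L_1)$, but as you note the trivial bound $c_0\leq H_{\epsilon}(L_1)$ suffices), so your argument proves the lemma under the weaker assumptions $m_1\geq m_2$, $m_2\geq\frac{\epsilon}{8}$, $\epsilon>0$. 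What the paper's heavier computation buys is a quantitatively sharper estimate, $c_0\leq-\frac{14}{9}M_1$, i.e.\ a bound by a fixed multiple of $M_1$ independent of $\epsilon$; since nothing else in the paper exploits that extra strength, your shorter argument would serve the paper's purposes equally well.
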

\begin{proof}
Note the first critical point of the Lagrange problem as $l=(\iota,0)$. Since $m_1\geq m_2$, we have $0<l<\frac{1}{2}$
.
By (\ref{D1U}), 
\begin{equation}\label{l1}\frac{m_1}{(\iota+\frac{1}{2})^2}-\frac{m_2}{(\iota-\frac{1}{2})^2}-\epsilon\iota=0.\end{equation}
The first critical value is 
$$V(l)=-\frac{m_1}{\iota+\frac{1}{2}}+\frac{m_2}{\iota-\frac{1}{2}}-\frac{\epsilon}{2}\iota^2.$$
By (\ref{l1}), we have
\begin{equation}\label{V(iota)}\begin{aligned}
V(\iota)
=&\big(\frac{1}{4(\iota+\frac{1}{2})^2}-\frac{3}{2(\iota+\frac{1}{2})}\big)m_1+\big(\frac{1}{4(\iota-\frac{1}{2})^2}+\frac{3}{2(\iota-\frac{1}{2})}\big)m_2\\
:=&Q_1m_1+Q_2m_2.
\end{aligned}\end{equation}

Set $$F_{\epsilon} :(0,\frac{1}{2})\rightarrow \mathbb{R},\iota\mapsto\frac{m_1}{(\iota+\frac{1}{2})^2}-\frac{m_2}{(\iota-\frac{1}{2})^2}-\epsilon\iota,$$
then $F_{\epsilon}$ is a decrease function with respect to $\iota$.
By (\ref{l1}), when $\epsilon=0$, the first cirtical point is $l_0=(\iota_0,0),$

$$\iota_0=\frac{1}{2}-\frac{1}{\sqrt{\frac{m_1}{m_2}}+1.}$$
i.e. $F_{0}(\iota_0)=0$.
Set the first critical point of the Lagrange problem as $l_1=(\iota_1,0)$, i.e.$F_{\epsilon}(\iota_1)=0$. Since for $\epsilon\geq 0$
$F_{\epsilon}(\iota_0)\leq F_{0}(\iota_0)=0$. By the decreasing of the funcction $F_{\epsilon}$, we get $\iota_1\leq\iota_0$.

If $1\leq\frac{m_1}{m_2}\leq9$, we get $0\leq \iota_1\leq \iota_0\leq \frac{1}{4}$.
By a direct computation, we know that in (\ref{V(iota)}), $$-2\leq Q_1\leq-\frac{14}{9}, -\frac{9}{4}\leq Q_2\leq -2.$$
As a result,
$$V(l_1)\leq -\frac{14}{9}m_1-2m_2\leq-\frac{14}{9}M_1.$$
By lemma \ref{lem2}, $V(l_1)=c_0=min\{H_{\epsilon}(L_1), H_{\epsilon}(L_2), H_{\epsilon}(L_3)\}$ under the conditions $m_1\geq\frac{\epsilon}{2}$, $m_2\geq\frac{3\epsilon}{8}$. $$c_0\leq -\frac{14}{9}M_1=-M_1-\frac{5}{9}M_1<-M_1-\frac{\epsilon}{8}.$$

\end{proof}

\begin{remark}
Consider the Lagrange problem with $m_2=0$, it has two critical points,  the first one in the x-axis with $x<0.5$ and the second one with $x>0$. We can find many counter examples such that $c_0>c_{crit}$. For example, if $m_1=80$, $m_2=0$ and $\epsilon=8$, then $c_0=-65$, $c_{crit}=-81$. 

By computer experiment, all the counter examples we get has the second critical point on $x>0.5$. So we can't get a counter example for $m_2\neq 0$ by continuity. 

Moreover, by computer experiment, we can't find a counter example for the Lagrange problem with $m_2\neq 0$ for a large range of $m_1>0$, $m_2>0$ and $\epsilon>0$. So we have reason to guess that $c_0<c_{crit}$ actually hold for the Lagrange problem with $m_1>0$, $m_2>0$ and $\epsilon>0$. This can be recognized as an open problem.
\end{remark}

\section{The moment map}

Now we define a torus action on the regularized moduli space $\overline{\Sigma}_{c,\epsilon}^{e}$. In order to do that we first need the periods. The set $S_{c,\kappa,\epsilon}^1$ is diffeomorphic to a circle, which coincides with the periodic orbit of the Hamiltonian $K_{c,\epsilon}^1$ of energy $-\kappa$.

Set
\begin{equation}\label{x,y}
\left\{
\begin{aligned}
	x&=\cosh\mu,\\
	y&=\cos\nu.
\end{aligned}
\right.
\end{equation}

Then the periods are as follows.
\begin{equation}
\begin{aligned}
\frac{\tau^1_{c,\epsilon}(-\kappa)}{4}&=\int_0^{\frac{\tau^1_{c,\epsilon}(-\kappa)}{4}}dt\\
&=\int_0^{u_0}\frac{d\mu}{\sqrt{2(-\kappa-W_{c,\epsilon}^1)}}\\
&=\int_{1}^{x_0}\frac{dx}{\sqrt{2(x^2-1)\big(-\kappa+\frac{\epsilon}{32}x^4+\frac{c}{4}(x^2-1)-\frac{\epsilon}{32}x^2+\frac{M_1}{2} x\big)}},
\end{aligned}
\end{equation}
\begin{equation}
\begin{aligned}
\frac{\tau^2_{c,\epsilon}(\kappa)}{4}&=\int_0^{\frac{\tau^2_{c,\epsilon}(\kappa)}{4}}dt\\
&=\int_0^{\nu_0}\frac{d\nu}{\sqrt{2(\kappa-W_{c,\epsilon}^2)}}\\
&=\int_{y_0}^{1}\frac{dy}{\sqrt{2(1-y^2)\big(\kappa-\frac{\epsilon}{32}y^4-\frac{c}{4}(y^2-1)+\frac{\epsilon}{32}y^2-\frac{M_2}{2} y\big)}}.\\\end{aligned}
\end{equation}

Denote by $\Phi^t_{K_{c,\epsilon}^1}$ the flow of the Hamiltonian vector field of $K_{c,\epsilon}^1$ on $T^*\mathbb{R}$ and by $\Phi^t_{K_{c,\epsilon}^2}$ the flow of the Hamiltonian vector field of $K_{c,\epsilon}^2$. We abbreviate by $S^1=\mathbb{R}/\mathbb{Z}$ the circle and define the two-dimensional torus as $T^2=S^1\times S^1$. In view of the slicing (\ref{slicing of e}) we are now in position to define a torus action
$$T^2\times\overline{\Sigma}_{c,\epsilon}^e\rightarrow \overline{\Sigma}_{c,\epsilon}^e$$
given by 
$$(t_1,t_2,z_1,w_1,z_2,w_2)\rightarrow \bigg(\Phi_{K^1_{c,\epsilon}}^{t_1\tau_{c,\epsilon}^1(K_{c,\epsilon}^1)}(z_1,w_1),\Phi_{K^2_{c,\epsilon}}^{t_2\tau_{c,\epsilon}^2(K_{c,\epsilon}^2)}(z_2,w_2)\bigg)$$

Let $\mathcal T_{c,\epsilon}^1$ be the primitive of $\tau_{c,\epsilon}^1$ given by 
$$\mathcal T^1_{c,\epsilon}(\kappa)=\int_{-\frac{M_1}{2}}^{\kappa}\tau_{c,\epsilon}^1(b)db$$
and similarly define
$$\mathcal T^2_{c,\epsilon}(\kappa)=\int_{-\frac{M_2}{2}}^{\kappa}\tau_{c,\epsilon}^2(b)db.$$

Then the map 
$$\mu_{c,\epsilon}=(\mu_{c,\epsilon}^1,\mu_{c,\epsilon}^2):\overline{\Sigma}_{c,\epsilon}\rightarrow \mathbb{R}^2=Lie(T^2)$$
with
$$\mu_{c,\epsilon}^1=\mathcal T_{c,\epsilon}^1\circ K_{c,\epsilon}^1, \mu_{c,\epsilon}^2=\mathcal T_{c,\epsilon}^2\circ K_{c,\epsilon}^2$$
is a moment map for the torus action on $\overline{\Sigma}_c$.
By the slicing (\ref{slicing of e}) its image is given by 
$$\text{im}\mu_{c,\epsilon}=\{(\mathcal T_{c,\epsilon}^1(-\kappa), \mathcal T_{c,,\epsilon}^2(\kappa))\}\subset [0,\infty)^2\subset \mathbb{R}^2.$$

The functions $\mathcal T_{c,\epsilon}^1$ and $\mathcal T_{c,\epsilon}^2$ are both strictly monotone. Therefore there exists a strictly decreasing smooth function
$$f_{c,\epsilon}:[0,\mathcal T^1_{c,\epsilon}(M_2/2)]\rightarrow [0,\mathcal T^2_{c,\epsilon}(M_1/2)]$$
such that
\begin{equation}\label{T2=f(T1)}
\mathcal T_{c,\epsilon}^2(\kappa)=f_{c,\epsilon}(\mathcal T_{c,\epsilon}^1(-\kappa)).
\end{equation}

Note that the image of the moment map can be written as the graph
$$\text{im}\mu_{c,\epsilon}=\Gamma_{f_{c,\epsilon}}.$$

Take the derivative of ($\ref{T2=f(T1)}$) with respect to $\kappa$, we have $$\tau_{c,\epsilon}^2(\kappa)=-f_{c,\epsilon}'\cdot\tau_{c,\epsilon}^1(-\kappa),$$i.e.
\begin{equation}\label{f'}
f_{c,\epsilon}'=-\frac{\tau_{c,\epsilon}^2(\kappa)}{\tau_{c,\epsilon}^1(-\kappa)}.
\end{equation}
Since $\tau_{c,\epsilon}^2(\kappa)$ and $\tau_{c,\epsilon}^1(-\kappa)$ are both positive, we have $f_{c,\epsilon}'<0$. 
%It implies that for Lagrange problem $f'_{c,1}>0$.

The torus action and moment map on $\overline{\Sigma}_{c,\epsilon}^m$ can be defined similarly and lead to the same result as above. The only differences are the integral's boundary of $\mathcal{T}^2_{c,\epsilon}(\kappa)$ changes to $[\frac{M_2}{2}, \kappa]$ and the domain of the definition of $f_{c,\epsilon}$ is $[0, \mathcal{T}_{c,\epsilon}^1(-M_2/2)]$.

Since by Theorem 2.1 in Delzant\cite{Delzant}(see also\cite{Karshon}) the image of the moment map determines its preimage up to equivariant symplectomorphisms. Denote $\Omega=\text{im} \mu_{c,\epsilon}$, then $\mu_{c,\epsilon}^{-1}(\Omega)$ is symplectmorphism to the toric domain $X_{\Omega}$. Together with lemma \ref{lem7} we have the following theorem. We will give a brief introduction about toric domain and some of its propositions in the next section, which can also be found in \cite{Gutt}.
\begin{theorem}
Assume $0<m_2\leq m_1\leq 9m_2$, $m_1\geq\frac{\epsilon}{2}>0$ and $m_2\geq\frac{3\epsilon}{8}$, for every energy value below the minimum of the first, second and third critical value, i.e. $c<c_0$, each bounded component of the regularized energy hypersurface of the Lagrange problem arises as the boundary of a strictly monotone toric domain.
\end{theorem}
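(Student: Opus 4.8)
The plan is to assemble the structures already constructed in this section into the single statement, the decisive input being that the arithmetic hypotheses force the regularization condition (\ref{condition of torus}) to hold throughout the energy range $c<c_0$.

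First I would record that the hypotheses $m_2\le m_1\le 9m_2$, $m_1\ge\epsilon/2$, $m_2\ge 3\epsilon/8$ are precisely those of Lemma \ref{lem7}, which yields $c_0<c_{crit}=-\frac{\epsilon}{8}-M_1$. Consequently, for every admissible energy $c<c_0$ one automatically has $c<c_{crit}$, i.e. $\frac{\epsilon}{8}+c+M_1<0$, which is exactly (\ref{condition of torus}). This is the key reduction: it guarantees that near both singular fibers $e=(0,\pi)$ and $m=(0,0)$ the reduced potentials $W^1_{c,\epsilon}$ and $W^2_{c,\epsilon}$ are cup-shaped, as recorded in (\ref{k m}) and (\ref{k e}), so the regularization of Section 3 applies and $\overline{\Sigma}_{c,\epsilon}^e$, $\overline{\Sigma}_{c,\epsilon}^m$ are genuine smooth compact bounded components admitting the slicings (\ref{slicing of e}) and (\ref{slicing of m}).

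Next I would verify that the two separated one-degree-of-freedom systems $K^1_{c,\epsilon}$ and $K^2_{c,\epsilon}$ each possess, for the relevant range of $\kappa$, a single periodic orbit whose period $\tau^1_{c,\epsilon}(-\kappa)$, respectively $\tau^2_{c,\epsilon}(\kappa)$, is finite and strictly positive. Finiteness is the one genuinely analytic point: the defining integrals carry singularities at the turning points $x_0$, $y_0$ and at the regularized collision $x=1$, so I would confirm that after the time change built into (\ref{H to K}) these are ordinary square-root singularities of the form $1/\sqrt{x-1}$ and the integrals converge. Granting this, the period-normalized flows $\Phi^{t_1\tau^1_{c,\epsilon}}_{K^1_{c,\epsilon}}$ and $\Phi^{t_2\tau^2_{c,\epsilon}}_{K^2_{c,\epsilon}}$ define an effective $T^2$-action, free on the open dense union of Arnold--Liouville tori, and the primitives $\mathcal T^1_{c,\epsilon}$, $\mathcal T^2_{c,\epsilon}$ of the periods assemble into the moment map $\mu_{c,\epsilon}=(\mathcal T^1_{c,\epsilon}\circ K^1_{c,\epsilon},\,\mathcal T^2_{c,\epsilon}\circ K^2_{c,\epsilon})$.

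Finally I would identify the image. By the slicing, $\operatorname{im}\mu_{c,\epsilon}=\{(\mathcal T^1_{c,\epsilon}(-\kappa),\mathcal T^2_{c,\epsilon}(\kappa))\}$, and since both primitives are strictly monotone there is a smooth $f_{c,\epsilon}$ satisfying (\ref{T2=f(T1)}); differentiating it gives $f'_{c,\epsilon}=-\tau^2_{c,\epsilon}(\kappa)/\tau^1_{c,\epsilon}(-\kappa)<0$ by the strict positivity of the periods, so $\operatorname{im}\mu_{c,\epsilon}=\Gamma_{f_{c,\epsilon}}$ is the graph of a strictly decreasing function --- exactly a strictly monotone toric region $\Omega$. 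Invoking Delzant's theorem \cite{Delzant}, which asserts that the moment image determines the total space up to equivariant symplectomorphism, $\mu_{c,\epsilon}^{-1}(\Omega)=\overline{\Sigma}_{c,\epsilon}^e$ is symplectomorphic to $X_\Omega$, and the identical argument with integration bounds $[\tfrac{M_2}{2},\kappa]$ settles $\overline{\Sigma}_{c,\epsilon}^m$. I expect the main obstacle to be the convergence and strict positivity of the period integrals at the degenerate ends $\kappa=\pm\tfrac{M_2}{2},\,\tfrac{M_1}{2}$, where the Arnold--Liouville tori collapse to circles and one must confirm the $T^2$-action extends smoothly rather than developing a singularity.
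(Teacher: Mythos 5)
Your proposal is correct and follows essentially the same route as the paper: Lemma \ref{lem7} forces $c_0<c_{crit}$ so that condition (\ref{condition of torus}) holds for all $c<c_0$, the slicings (\ref{slicing of e}) and (\ref{slicing of m}) then yield the torus action and moment map, whose image is the graph of the strictly decreasing function $f_{c,\epsilon}$ by (\ref{f'}), and Delzant's theorem identifies the bounded components with the corresponding strictly monotone toric domains. Your added attention to convergence and positivity of the period integrals at the turning points and degenerate tori is a point the paper leaves implicit, but it is a refinement of the same argument, not a different one.
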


Together with Proposition 1.8 in \cite{Gutt}, also proposition 4 in the next section we can get the following corollary.

\begin{corollary}
Assume $0<m_2\leq m_1\leq 9m_2$, $m_1\geq\frac{\epsilon}{2}>0$ and $m_2\geq\frac{3\epsilon}{8}$, for every energy value below the minimum of the first, second and third critical value, i.e. $c<c_0$, each bounded component of the regularized energy hypersurface of the Lagrange problem is dynamically convex.
\end{corollary}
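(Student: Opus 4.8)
The plan is to obtain dynamical convexity as a direct consequence of Theorem 1 together with the known dictionary between the shape of a toric domain and the Conley--Zehnder indices of the Reeb orbits on its boundary. Recall that a closed contact hypersurface is \emph{dynamically convex} if every contractible closed Reeb orbit $\gamma$ has Conley--Zehnder index $\mu_{\mathrm{CZ}}(\gamma)\geq 3$. By Theorem 1 and the Delzant identification $\mu_{c,\epsilon}^{-1}(\Omega)\cong X_{\Omega}$, each bounded component $\overline{\Sigma}^{e}_{c,\epsilon}$ (and likewise $\overline{\Sigma}^{m}_{c,\epsilon}$) is the boundary of the strictly monotone toric domain $X_{\Omega}$, where $\Omega$ is the region under the graph $\Gamma_{f_{c,\epsilon}}$ of the strictly decreasing function of (\ref{T2=f(T1)}), so that $f_{c,\epsilon}'<0$ by (\ref{f'}). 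Such an $X_{\Omega}$ is star-shaped with $\partial X_{\Omega}$ diffeomorphic to $S^3$, hence simply connected; every closed Reeb orbit is therefore contractible, and dynamical convexity reduces to the assertion that $\mu_{\mathrm{CZ}}(\gamma)\geq 3$ for \emph{every} closed Reeb orbit $\gamma$ on $\partial X_{\Omega}$. Since the Conley--Zehnder index is invariant under the time reparametrization and the symplectic covering $L$ of Section 3, this is equivalent to dynamical convexity of the original regularized energy hypersurface.

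First I would invoke Proposition 1.8 of \cite{Gutt} (recorded as Proposition 4 in the next section), which states that a strictly monotone toric domain has dynamically convex boundary, and whose hypothesis is exactly the strict monotonicity $f_{c,\epsilon}'<0$ already established. The mechanism behind that proposition, which I would recall, is an explicit description of the Reeb dynamics on $\partial X_{\Omega}$: the simple closed orbits are the two ``axis'' orbits lying over the endpoints of $\Gamma_{f_{c,\epsilon}}$ on the coordinate axes --- precisely the circles to which the Arnold--Liouville tori $S_{-\kappa,c,\epsilon}^{1,e}\times S_{\kappa,c,\epsilon}^{2,e}$ of the slicing (\ref{slicing of e}) degenerate at the two endpoints $\kappa=-M_2/2$ and $\kappa=M_1/2$ --- together with the Morse--Bott families of orbits filling the tori over those interior points of $\Gamma_{f_{c,\epsilon}}$ whose outward normal has rational slope. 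The Conley--Zehnder index of each orbit and each of its iterates is read off from the rotation number of the linearized return map, which for a toric domain is controlled by the outward normals along the upper boundary of $\Omega$. Strict monotonicity forces the outward normal to turn \emph{monotonically} through exactly $\pi/2$ as one traverses $\Gamma_{f_{c,\epsilon}}$ from the $x$-axis to the $y$-axis, and this bounded monotone turning is exactly the condition under which every such rotation number is large enough to make $\mu_{\mathrm{CZ}}\geq 3$; this is the content of the cited proposition.

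The main obstacle is not the index bookkeeping, which is packaged in Proposition 1.8, but checking that our $X_{\Omega}$ genuinely satisfies that proposition's hypotheses at the two degenerate ends $\kappa=-M_2/2$ and $\kappa=M_1/2$. On the open interval $-\tfrac{M_2}{2}<\kappa<\tfrac{M_1}{2}$ the slicing (\ref{slicing of e}) produces honest Arnold--Liouville tori and the smooth strict monotonicity $f_{c,\epsilon}'<0$ holds; at the endpoints, however, the fibers collapse to the two axis circles, and one must verify that $\partial X_{\Omega}$ is smooth there and that the limiting slope of $\Gamma_{f_{c,\epsilon}}$ remains finite and strictly negative, so that $X_{\Omega}$ is a genuine smooth strictly monotone toric domain rather than a weakly monotone or cornered one. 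By (\ref{f'}) this slope is $f_{c,\epsilon}'=-\tau_{c,\epsilon}^2(\kappa)/\tau_{c,\epsilon}^1(-\kappa)$, so the task reduces to showing that both period integrals $\tau_{c,\epsilon}^1(-\kappa)$ and $\tau_{c,\epsilon}^2(\kappa)$ stay finite and bounded away from zero as $\kappa$ tends to either endpoint. This is where the nondegeneracy (\ref{k m})--(\ref{k e}), guaranteed by the standing assumption (\ref{condition of torus}), enters: it makes the minima of the potentials $W_{c,\epsilon}^1$ and $W_{c,\epsilon}^2$ at the collision points nondegenerate, so that the collapsing orbit has a finite small-oscillation period, while $c<c_0$ keeps the complementary librating orbit strictly below its separatrix and hence of finite period. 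Together these give the finiteness and non-vanishing of $f_{c,\epsilon}'$ up to the endpoints, confirming that $X_{\Omega}$ is of the type to which Proposition 1.8 applies and completing the deduction.
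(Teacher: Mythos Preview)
Your argument is correct and follows exactly the paper's route: the corollary is deduced immediately from Theorem~1 together with Proposition~1.8 of \cite{Gutt} (Proposition~4 in Section~5), which gives dynamical convexity directly from the strict monotonicity of the toric domain. Your additional verification that the period ratio $f_{c,\epsilon}'=-\tau_{c,\epsilon}^2/\tau_{c,\epsilon}^1$ stays finite and nonzero at the endpoints, so that $X_{\Omega}$ has smooth boundary and genuinely meets the hypotheses of that proposition, is more than the paper itself spells out, but it is a welcome check rather than a departure from the approach.
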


\section{Toric domains}

\begin{definition}
If $\Omega$ is a domain in $\mathbb{R}^n_{\geq 0}$, define the toric domain
$$X_{\Omega}=\{z\in \mathbb{C}^n|\pi(|z_1|^2,\cdot\cdot\cdot,|z_n|^2)\in\Omega\}$$
\end{definition}
The factors $\pi$ ensure that
$$Vol(X_{\Omega})=Vol(\Omega)$$
Let $\partial_+\Omega$ denote the set of $\mu\in \partial \Omega$ such that $\mu_j>0$ for all $j=1,\cdot\cdot\cdot,n.$
\begin{definition}
A strictly monotone toric domain is a compact toric domain $X_{\Omega}$ with smooth boundary such that if $\mu\in\partial_+\Omega$ and if $v$ an outward normal vector at $\mu$, then $v_j\geq 0$ for all $j=1,\cdot\cdot\cdot,n.$ 
\end{definition}

If $\Omega$ is a domain in $\mathbb{R}^n$, define 
$$\hat{\Omega}=\{|(|\mu_1|,\cdot\cdot\cdot,|\mu_n|)\in\Omega\}.$$
\begin{definition}
A convex toric domain is a  toric domain $X_\Omega$ such that $\hat{\Omega}$ is compact and convex.
\end{definition}

This terminology may be misleading because a "convex toric domain" is not the same thing as a compact toric domain that is convex in $\mathbb{R}$ as showed in the following properties.
\begin{proposition}
A toric domain $X_{\Omega}$ is a convex subset of $\mathbb{R}^{2n}$ if and only if the set 
$$\tilde{\Omega}=\{\mu\in\mathbb{R}^n|\pi(|\mu_1|^2,\cdot\cdot\cdot,|\mu_n|^2)\in\Omega\}.$$
is convex in $\mathbb{R}^n$.
\end{proposition}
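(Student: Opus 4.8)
The plan is to exploit the fact that $X_\Omega$ and $\tilde\Omega$ are cut out by the \emph{same} region $\Omega$ through the squaring maps $\Phi(z)=\pi(|z_1|^2,\dots,|z_n|^2)$ on $\mathbb{C}^n=\mathbb{R}^{2n}$ and $\Psi(\mu)=\pi(\mu_1^2,\dots,\mu_n^2)$ on $\mathbb{R}^n$, so that $X_\Omega=\Phi^{-1}(\Omega)$ and $\tilde\Omega=\Psi^{-1}(\Omega)$. The first observation is that $\tilde\Omega$ is precisely the slice of $X_\Omega$ by the real subspace $\{z\in\mathbb{C}^n:\operatorname{Im}z_j=0,\ j=1,\dots,n\}$: when $z$ has all coordinates real, $z_j=\mu_j$, one has $\Phi(z)=\Psi(\mu)$, so $z\in X_\Omega$ iff $\mu\in\tilde\Omega$. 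Since the intersection of a convex set with a linear subspace is convex, this immediately gives the forward direction: if $X_\Omega$ is convex in $\mathbb{R}^{2n}$, then $\tilde\Omega$ is convex in $\mathbb{R}^n$.

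For the reverse implication I would show directly that convex combinations stay inside $X_\Omega$. Given $z,w\in X_\Omega$ and $\lambda\in[0,1]$, write $r(z)=(|z_1|,\dots,|z_n|)\in\mathbb{R}^n_{\ge 0}$; then $\Psi(r(z))=\Phi(z)\in\Omega$ shows $r(z)\in\tilde\Omega$, and likewise $r(w)\in\tilde\Omega$. Convexity of $\tilde\Omega$ yields $\lambda r(z)+(1-\lambda)r(w)\in\tilde\Omega$, and this point lies in $\mathbb{R}^n_{\ge 0}$. On the other hand the triangle inequality gives, coordinatewise, $0\le|\lambda z_j+(1-\lambda)w_j|\le\lambda|z_j|+(1-\lambda)|w_j|$, that is, $0\le r(\lambda z+(1-\lambda)w)\le\lambda r(z)+(1-\lambda)r(w)$. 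It therefore suffices to know that $\tilde\Omega$ is downward closed in the positive orthant: if $p\in\tilde\Omega\cap\mathbb{R}^n_{\ge 0}$ and $0\le q\le p$ componentwise, then $q\in\tilde\Omega$. Granting this, $r(\lambda z+(1-\lambda)w)\in\tilde\Omega$, hence $\Phi(\lambda z+(1-\lambda)w)=\Psi(r(\lambda z+(1-\lambda)w))\in\Omega$, so $\lambda z+(1-\lambda)w\in X_\Omega$.

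The heart of the matter, and the one genuine step, is this downward-closedness, which I would extract from convexity together with a reflection symmetry built into the definition: since $\mu_j^2=(-\mu_j)^2$, the set $\tilde\Omega$ is invariant under each sign change $\mu_j\mapsto-\mu_j$. Given $p\ge 0$ in $\tilde\Omega$ and $0\le q\le p$, I would move from $p$ to $q$ one coordinate at a time. Reflecting the $j$-th coordinate sends $(q_1,\dots,q_{j-1},p_j,p_{j+1},\dots,p_n)$ to $(q_1,\dots,q_{j-1},-p_j,p_{j+1},\dots,p_n)$, both in $\tilde\Omega$, so by convexity every point whose $j$-th coordinate lies in $[-p_j,p_j]$ and whose other coordinates are unchanged lies in $\tilde\Omega$; taking that coordinate to be $q_j\in[0,p_j]$ and iterating over $j=1,\dots,n$ lands at $q$. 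The main obstacle is purely in recognizing that symmetry plus convexity forces this monotonicity; once it is isolated, both the triangle-inequality estimate and the subspace-slice observation are routine. I note finally that this argument uses no compactness or closedness hypothesis on $\Omega$, so the stated equivalence holds for an arbitrary region $\Omega$.
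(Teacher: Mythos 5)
Your proof is correct, but there is essentially nothing in the paper to compare it against: the paper's entire ``proof'' of this proposition is the citation ``See Proposition 2.3 in \cite{Gutt}'', so your argument supplies exactly the content the paper outsources to the literature. Both directions check out. The forward implication is, as you say, the observation that $\tilde\Omega$ is the slice of $X_\Omega$ by the real subspace $\{\operatorname{Im}z_j=0\}$, and intersecting a convex set with a linear subspace preserves convexity. The reverse implication correctly isolates the one nontrivial point: the triangle inequality only yields $0\le r(\lambda z+(1-\lambda)w)\le\lambda r(z)+(1-\lambda)r(w)$ coordinatewise, so one must additionally know that $\tilde\Omega\cap\mathbb{R}^n_{\ge0}$ is downward closed, and your derivation of this from convexity together with the sign-flip symmetry $\mu_j\mapsto-\mu_j$ is sound. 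It can be packaged slightly more compactly by noting that the box $\prod_{j}[-p_j,p_j]$ is the convex hull of the $2^n$ sign-flipped copies of $p$, all of which lie in $\tilde\Omega$, so the whole box lies in $\tilde\Omega$ and in particular $q$ does; your coordinate-by-coordinate induction is an equivalent unwinding of this. Your closing remark is also apt: no compactness or closedness of $\Omega$ is used, which matches the generality in which the proposition is stated. In short, you have reconstructed (a self-contained version of) the standard argument from the cited reference, which is the natural and, as far as the reverse direction goes, essentially the only route.
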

\begin{proof}
See Proposition 2.3 in \cite{Gutt}.
\end{proof}
\begin{proposition}
If $X_\Omega$ is a convex toric domain, then $X_\Omega$ is a convex set of   $\mathbb{R}^{2n}.$
\end{proposition}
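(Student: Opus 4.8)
The plan is to reduce the statement to the convexity criterion recorded just above. By the previous proposition, $X_\Omega$ is a convex subset of $\mathbb{R}^{2n}$ if and only if $\tilde\Omega=\{\mu\in\mathbb{R}^n:(\pi\mu_1^2,\dots,\pi\mu_n^2)\in\Omega\}$ is convex in $\mathbb{R}^n$. Hence it is enough to show that the defining hypothesis of a convex toric domain, namely that $\hat\Omega$ is compact and convex, forces $\tilde\Omega$ to be convex. Throughout I would work with the positive part $\Omega=\hat\Omega\cap\mathbb{R}^n_{\ge 0}$, from which I first extract two structural consequences.

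First I would observe that $\Omega$ is convex, being the intersection of the convex set $\hat\Omega$ with the convex orthant $\mathbb{R}^n_{\ge 0}$. Second, and this is the step that genuinely uses the reflection symmetry built into $\hat\Omega$, I would show that $\Omega$ is a \emph{lower set}: if $a\in\Omega$ and $0\le b\le a$ coordinatewise, then $b\in\Omega$. The idea is that for each choice of signs the point $(\pm a_1,\dots,\pm a_n)$ lies in $\hat\Omega$, since its vector of absolute values is $a\in\Omega$. By convexity, $\hat\Omega$ then contains the convex hull of these $2^n$ points, which is exactly the box $\prod_j[-a_j,a_j]$. As $b$ lies in this box and has nonnegative entries, $b\in\hat\Omega\cap\mathbb{R}^n_{\ge 0}=\Omega$.

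With $\Omega$ convex and downward closed in hand, I would prove convexity of $\tilde\Omega$ directly rather than reconstructing it from its positive part. Given $\mu,\mu'\in\tilde\Omega$ and $\lambda\in[0,1]$, set $w=\lambda\mu+(1-\lambda)\mu'$; the goal is $(\pi w_j^2)_j\in\Omega$. By convexity of $\Omega$, the point $p\in\Omega$ with $j$-th coordinate $p_j=\pi\big(\lambda\mu_j^2+(1-\lambda)(\mu_j')^2\big)$ lies in $\Omega$. The triangle inequality gives $|w_j|\le\lambda|\mu_j|+(1-\lambda)|\mu_j'|$, and convexity of $t\mapsto t^2$ then yields $\pi w_j^2\le\pi\big(\lambda\mu_j^2+(1-\lambda)(\mu_j')^2\big)=p_j$ for every $j$. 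Thus $(\pi w_j^2)_j\le p$ coordinatewise with $p\in\Omega$, and the lower-set property forces $(\pi w_j^2)_j\in\Omega$, i.e. $w\in\tilde\Omega$. Invoking the criterion of the previous proposition once more then gives that $X_\Omega$ is convex.

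The main obstacle is conceptual rather than computational: the reparametrization $u_j\mapsto\pi u_j^2$ is convex, not affine, so neither images nor preimages of convex sets stay convex for free, and one cannot simply transport convexity of $\hat\Omega$ to $\tilde\Omega$. The argument succeeds precisely because squaring is convex in the correct direction and $\Omega$ is a lower set: the convexity defect produced by squaring pushes $(\pi w_j^2)_j$ \emph{below} a bona fide point $p$ of $\Omega$, and downward closure then recaptures it. So the crux is to establish the lower-set property from convexity of $\hat\Omega$ via the box argument, and to pair it with the inequality $\pi w_j^2\le p_j$; the compactness assumption plays no role in the convexity itself and is only needed to make $X_\Omega$ a genuine (bounded) domain.
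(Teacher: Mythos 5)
Your proof is correct, but it is worth noting that the paper does not actually prove this statement at all: its ``proof'' is the single line ``See Example 2.4 in \cite{Gutt}.'' So your argument is a genuine, self-contained replacement for an external citation. The structure is sound: reducing to the criterion of the preceding proposition (convexity of $X_\Omega$ in $\mathbb{R}^{2n}$ is equivalent to convexity of $\tilde\Omega$), then deriving from convexity of $\hat\Omega$ the two facts you need about $\Omega=\hat\Omega\cap\mathbb{R}^n_{\ge 0}$ --- convexity, and downward closure via the box argument (the convex hull of the $2^n$ sign-reflected copies of $a$ is exactly $\prod_j[-a_j,a_j]$) --- and finally combining the coordinatewise inequality $\pi w_j^2\le \pi\bigl(\lambda\mu_j^2+(1-\lambda)(\mu_j')^2\bigr)$ with the lower-set property to place the midpoint's image back in $\Omega$. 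This is precisely the right way to handle the nonlinearity of $u\mapsto\pi u^2$, and your closing remark that compactness of $\hat\Omega$ is irrelevant to convexity is also accurate. What your approach buys, compared to the paper, is that the proposition becomes verifiable within the paper itself rather than resting on a reference; the cost is negligible, since the whole argument is elementary. One small point of rigor you could make explicit: the identity $\Omega=\hat\Omega\cap\mathbb{R}^n_{\ge 0}$ uses that $\Omega\subset\mathbb{R}^n_{\ge 0}$ by the definition of a toric domain, so that for $\mu\ge 0$ the conditions $\mu\in\Omega$ and $(|\mu_1|,\dots,|\mu_n|)\in\Omega$ coincide.
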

\begin{proof}
See Example 2.4 in \cite{Gutt}.
\end{proof}

\begin{proposition}
Let $X_\Omega$ be a compact star-shaped toric domain in $\mathbb{R}^4$ with smooth boundary. Then $X_\Omega$ is dynamically convex if and only if $X_\Omega$ is a strictly monotone toric domain.
\end{proposition}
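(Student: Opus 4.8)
The plan is to reduce the statement to an explicit computation of Conley--Zehnder indices of the closed Reeb orbits on $\partial X_\Omega$, in the spirit of the toric analysis of \cite{Gutt}. Since $X_\Omega\subset\mathbb{R}^4=\mathbb{C}^2$ is toric and star-shaped, the standard contact form $\lambda=\tfrac12\sum_i(x_i\,dy_i-y_i\,dx_i)$ restricted to $\partial X_\Omega$ has a Reeb flow that preserves the torus fibration $\mu=\pi(|z_1|^2,|z_2|^2)$. Over an interior boundary point $p\in\partial_+\Omega$ with outward normal $\nu(p)=(\nu_1,\nu_2)$, the Reeb vector field is tangent to the $2$-torus $\mu^{-1}(p)$ and points in the angular direction $(\nu_1,\nu_2)$; hence a closed Reeb orbit lies over $p$ exactly when $\nu(p)$ is parallel to a primitive integer vector, together with the two distinguished orbits over the endpoints of $\partial_+\Omega$ on the coordinate axes (where smoothness of $\partial X_\Omega$ forces the normals $(1,0)$ and $(0,1)$). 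First I would record this orbit--normal correspondence carefully, together with the action and multiplicity of each orbit.

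Second, I would compute the Conley--Zehnder index $\mu_{\mathrm{CZ}}$ of each such orbit and its iterates by linearizing the return map. In dimension four the linearized flow splits into a fiber-rotation part and a single transverse direction governed by the turning of the Gauss map $p\mapsto\nu(p)/|\nu(p)|$ along $\partial_+\Omega$; the index is then a winding contribution measuring how far the normal direction has rotated away from $(1,0)$, plus a contribution from the convexity (second fundamental form) of the boundary curve. The qualitative output I need from this step is that, \emph{provided the normal angle stays inside $[0,\tfrac{\pi}{2}]$}, every closed Reeb orbit satisfies $\mu_{\mathrm{CZ}}\ge 3$, with the minimal value $3$ attained on an axis orbit, while any excursion of the normal outside that sector produces an orbit whose winding contribution is deficient.

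Third, I would assemble the equivalence. If $X_\Omega$ is strictly monotone then $\nu_1,\nu_2\ge0$ everywhere on $\partial_+\Omega$, so the normal angle never leaves $[0,\tfrac{\pi}{2}]$; by the index computation every closed Reeb orbit then has $\mu_{\mathrm{CZ}}\ge 3$, which is precisely dynamical convexity in dimension four ($n+1=3$ for $n=2$). Conversely, if strict monotonicity fails there is a point of $\partial_+\Omega$ at which some $\nu_j<0$, i.e.\ the normal angle exits $[0,\tfrac{\pi}{2}]$; by the intermediate value theorem applied to the Gauss map this excursion sweeps out a rational direction, over which lives a closed orbit with deficient winding, forcing $\mu_{\mathrm{CZ}}\le 2$ and hence violating dynamical convexity. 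I would stress that the restriction to $\mathbb{R}^4$ is essential here, since both the threshold $3$ and the single-transverse-direction splitting of the return map are special to $n=2$.

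The main obstacle is the precise index calculation of the second step: correctly tracking iterated and possibly degenerate orbits, and pinning down the exact integer contributions so that the axis orbits really come out with $\mu_{\mathrm{CZ}}=3$ and the monotone/non-monotone dichotomy cleanly separates $\mu_{\mathrm{CZ}}\ge 3$ from $\mu_{\mathrm{CZ}}\le 2$. This is exactly the toric Conley--Zehnder computation carried out in \cite{Gutt}, which I would invoke to close the argument once the above reduction to the Reeb dynamics of $\partial X_\Omega$ has been set up.
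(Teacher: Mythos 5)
Your proposal is, in substance, the same as the paper's: the paper's entire proof of this proposition is the single line ``See Proposition 1.8 in \cite{Gutt},'' and your argument, after sketching the toric Reeb-orbit and Conley--Zehnder index analysis, closes by invoking exactly that result for the decisive index computation. One caution should you ever expand the sketch into a full proof: the parenthetical claim that smoothness of $\partial X_\Omega$ forces the normals at the endpoints of $\partial_+\Omega$ to be $(1,0)$ and $(0,1)$ is false---for the ellipsoid $\{z:\pi|z_1|^2/a+\pi|z_2|^2/b\le 1\}$ the outward normal is proportional to $(1/a,1/b)$ along all of $\partial_+\Omega$ and at its endpoints, yet the boundary is a smooth hypersurface---though this slip does not affect your overall logic, since the distinguished axis orbits exist regardless and the precise index bookkeeping is exactly what \cite{Gutt} supplies.
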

\begin{proof}
See Proposition 1.8 in \cite{Gutt}.
\end{proof}

\section{Euler problem for one positive and one negative mass}

In \cite{Pinzari}, Gabriella Pinzari give a research on Euler problem with two fixed centers of positive masses. In the following, we generalize her results to Euler problem with the two fixed centers $e$ and $m$ of masses $m_1$ and $m_2$ satisfying $m_1>0$, $m_2\leq 0$, and $m_1>|m_2|$.  In this case, we just have to set $\epsilon=0$ in (\ref{H of Lagrange}) and the Hamiltonian function is
\begin{equation}\label{H0}
H_0(q,p)=T(p)-U_0(q)
\end{equation}
 where
$$U_0(q)=\frac{m_1}{\sqrt{(q_1+\frac{1}{2})^2+q_2^2}}+\frac{m_2}{\sqrt{(q_1-\frac{1}{2})^2+q_2^2}}$$ and $m_1> 0$, $m_2\leq 0$, $m_1> |m_2|$.
We can get the similar result for the Euler problem.

For the Euler problem with $m_1>0$ and $m_2>0$, if the energy is less than the critical value, i.e. $c<c_0$, then the Hill's region decomposes into three connected components and so is the energy hypersurface and the regularized hypersurface. Here we also abbreviate  $c_0=H_0(L)$ for Euler problem when there is no confusion.

For the Euler problem with $m_1>0$ and $m_2<0$, if $c<0$, then the Hill's region decomposes into three connected components and so is the energy hypersurface and the regularized hypersurface.  Different from the case where $m_1>0$ and $m_2>0$, the Hill's region near the fixed center $m$ with mass $m_2$ is not around but in the righthand side of $m$. If $c<c_0$, then the Hill's region consists of only one bounded component near the fixed center $e$ with mass $m_1$.

Here for the Euler problem, we only consider the bounded Hill's region around the fixed center $e$
$$\mathcal{R}_{c,0}^e=\pi(\Sigma_{c,0}^e).$$
The whole procession of regularization and the definition of toric domain are just similar as in the Lagrange problem.
After regularization, we get
$$K_{c,0}=K_{c,0}^1+K_{c,0}^2,$$
for
\begin{equation}\label{K1K2 of Euler}\begin{aligned}K_{c,0}^1=&\frac{1}{2}p_{\mu}^2-\frac{c}{4}(\cosh^2\mu-1)-\frac{M_1}{2}\cosh\mu:=\frac{1}{2}p_{\mu}^2+W_c^1(\mu).\\
K_{c,0}^2
=&\frac{1}{2}p_{\nu}^2+\frac{c}{4}(\cos^2\nu-1)+\frac{M_2}{2}\cos\nu:=\frac{1}{2}p_{\nu}^2+W_c^2(\nu).\end{aligned}\end{equation}

The reason why we define $K_c^1$ and $K_c^2$ like this is that $K_c^1$ is just the Euler integral by Lemma 3.1 in \cite{Pinzari}. It is very useful in the following. 

Set
\begin{equation}\label{x, y for Euler}
\left\{
\begin{aligned}
	x&=\cosh\mu,\\
	y&=\cos\nu.
\end{aligned}
\right.
\end{equation}
The Jacobi matrix from $(x,y)$ to $(\mu,\nu)$ is
\begin{equation}
D_2:=\frac{\partial(x,y)}{\partial(\mu,\nu)}=\left[\begin{matrix}
\frac{\partial x}{\partial \mu} & \frac{\partial x}{\partial \nu} \\
\frac{\partial y}{\partial \mu} & \frac{\partial y}{\partial \nu} 
\end{matrix}
\right]\\
=\left[
\begin{matrix} 
	\sinh\mu & 0\\ 0 &-\sinh\nu 
\end{matrix}
\right],
\end{equation}
then 
\begin{equation}\label{p_mu to p_x}
\left(\begin{matrix}p_{\mu}\\p_{\nu}\end{matrix}\right)
=D_2^{T}\left(\begin{matrix}p_x\\p_y\end{matrix}\right)
=\left[
\begin{matrix} 
	\sinh\mu & 0\\ 0 &-\sinh\nu 
\end{matrix}
\right]\left(\begin{matrix}p_x\\p_y\end{matrix}\right),\end{equation}
then we have $$K_{c,0}=\frac{1}{2}(x^2-1)P_x^2+\frac{1}{2}(1-y^2)P_y^2-\frac{c}{4}x^2+\frac{c}{4}y^2-\frac{M_1}{2}x+\frac{M_2}{2}y.$$
and
$$\begin{aligned}K_{c,0}^{1}:&=\frac{x^2-1}{2}p_x^2+V_c^1=\frac{x^2-1}{2}p_x^2-\frac{c}{4}(x^2-1)-\frac{M_1}{2} x,\\
K_{c,0}^{2}:&=\frac{1-y^2}{2}p_y^2+V_c^2=\frac{1-y^2}{2}p_y^2+\frac{c}{4}(y^2-1)+\frac{M_2}{2} y.\end{aligned}$$

By Legendre transformation, we know that $$p_x=\frac{1}{x^2-1}\dot{x}, p_y=\frac{1}{1-y^2}\dot{y},$$ where $\dot{x}=\frac{dx}{dt}, \dot{y}=\frac{dy}{dt}$.

Similar as in the Lagrange problem, by analyzing the potential energy of the separated systems we know that and if the condition $c+M_1<0$ holds, then the graphs of $W_c^1(\mu)$ and $W_c^2(\nu)$ are like cups near the fixed centers.

For he Euler problem with $m_1>0$ and $m_2>0$, since $c_0<-M_1$, when $c<c_0$, the bounded regularized energy surface around $e$ is $\overline \Sigma_{c,0}^e$ and
\begin{equation}\label{slicing of Euler}
\overline{\Sigma}^e_{c,0}=\bigcup_{\kappa\in[\kappa^{1,e}_{c,0},\kappa^{2,e}_{c,0}]}S_{-\kappa,c,0}^{1,e}\times S_{\kappa,c,0}^{2,e}\end{equation}

where $S_{-\kappa,c,0}^{1,e}\times S_{\kappa,c,0}^{2,e}$ is an Arnold-Liouville torus for $-\frac{M_2}{2}<\kappa<-\frac{M_1}{2}$ and the Arnold-Liouville torus degenerates to a circle for $\kappa=-\frac{M_2}{2}$ or $\kappa=-\frac{M_1}{2}$. 

For the Euler problem with $m_1>0$ and $m_2<0$, when $c<-M_1$, we have the same result as above. Here we also have $c_0<-M_1$.

When $K_{c,0}^1=-\kappa$, $K_{c,0}^1=\kappa$, and the periods are as following.
\begin{equation}\label{tau 1}
\begin{aligned}
\tau^1_{c}(-\kappa):=\tau^1_{c,0}(-\kappa)&=4\int_0^{\frac{\tau^1_{c,\epsilon}(-\kappa)}{4}}dt\\
&=4\int_1^{x_0}\frac{dx}{\sqrt{2(x^2-1)\big(-\kappa+\frac{c}{4}(x^2-1)+\frac{M_1}{2} x\big)}}	
\end{aligned}
\end{equation}
%for $x>1$.
\begin{equation}\label{tau 2}
\begin{aligned}
\tau^2_{c}(\kappa):=\tau^2_{c,0}(\kappa)&=4\int_0^{\frac{\tau^2_{c,\epsilon}(\kappa)}{4}}dt\\
&=4\int_{y_0}^1\frac{dy}{\sqrt{2(1-y^2)\big(\kappa-\frac{c}{4}(y^2-1)-\frac{M_2}{2}y\big)}}\\	
\end{aligned}
\end{equation}

Define
\begin{equation}\label{W}
W(\kappa)=\frac{\tau_c^2(\kappa)}{\tau_c^1(-\kappa)}=\frac{\tau(M_2,\kappa)}{\tau(M_1,\kappa)},
\end{equation}
By (\ref{f'}), we get $$f'_{c,0}(\mathcal T_c^1(-\kappa))=-W(\kappa).$$
Take its derivative with respect to $\kappa$, we have
%$$f''_{c,0}(\mathcal T_c^1(-\kappa))\cdot\tau_c^1(-\kappa)=\partial_{\kappa}W(\kappa)$$
\begin{equation}\label{f'' for Euler}f''_{c,0}(\mathcal T_c^1(-\kappa))=\frac{\partial_{\kappa}W(\kappa)}{\tau_c^1(-\kappa)}.
\end{equation}

Since by Theorem 2.1 in Delzant\cite{Delzant}(see also\cite{Karshon}) the image of the moment map determines its preimage up to equivariant symplectomorphisms. The sign of (\ref{f'' for Euler}) determines the convexity of the toric domain. Because $\tau_c^1(\kappa)>0$, we only need to determine the sign of $\partial_{\kappa}W(\kappa)$. But at first we need $\tau_c^1(-\kappa)$ and $\tau_c^2(\kappa)$ in a kind of good expression to allow us to achieve. 

As Gagriella Pinzari observed in Theorem 1.2 of \cite{Pinzari}, for Euler problem with $m_1>m_2>0$, the periods $\tau^1_{c}(-\kappa)$ and $\tau^2_{c}(\kappa)$ of the two separated systems of the Euler problem only depend on $M_1$ and $M_2$ respectively, not on the exact value of $m_1$ and $m_2$ at all. So we can choose $m_2=0$, then $\tau^1_{c}(-\kappa)$ is equal to the period of the first separated system of the regularized Kepler problem with centre mass $M_1$ and $\tau^2_{c}(\kappa)$ is equal to the period of the second separated system of the regularized Kepler problem with centre mass $M_2$. Since the two periods of the separated systems of the regularized Kepler problem with a fixed mass are the same, we can say that $\tau^1_{c}(-\kappa)$ and $\tau^2_{c}(\kappa)$ are equal to the periods of the separated system of the regularized Kepler problem with center masses $M_1$ and $M_2$ respectively.

This is also true for Euler problem with $m_1> 0$, $m_2\leq 0$ and $m_1> |m_2|$. So we can use the method of Gagriella Pinzari \cite{Pinzari} here.

The period of the regularized Kepler problem is also related to the period of the original Kepler problem which can be computed exactly and appears in a kind of good expression under the parameter eccentric anomaly for us to determine the sign of (\ref{f'' for Euler}) and then the convexity of the toric domain of the Euler problem with $m_1> 0$, $m_2\leq 0$ and $m_1> |m_2|$ by the following four steps.

Step 1: About the precise relationship between the periods of the Kepler problem and the regularized Euler problem.

In (\ref{H0}), we put the origin of Euler problem in the middle of the two big masses. But If we want to go to the Kepler problem to find the period of the regularized Euler problem, it is more easy for us to compute if we put the origin of the Cartesian coordinate on the nonzero mass.

We choose the origin of the Cartesian coordinate to be the body $e$ with mass $\tilde{m}_1$, then the Hamiltonian of the Euler problem is 

\begin{equation}\label{H of Euler}
H(\tilde{q},\tilde{p})=T(\tilde{q},\tilde{p})-U(\tilde{q},\tilde{p})
\end{equation}
 where
$$U(\tilde{q},\tilde{p})=\frac{\tilde{m}_1}{\sqrt{\tilde{q}_1^2+\tilde{q}_2^2}}+\frac{\tilde{m}_2}{\sqrt{(\tilde{q}_1-1)^2+\tilde{q}_2^2}}.$$
When $\tilde{m}_2=0$, $H$ is just the Hamiltonian of Kepler problem.

The relationship between the initial coordinate $(\tilde{q},\tilde{p})$ used here  and the coordinate $(q,p)$ in Euler problem are
\begin{equation}\label{tilde q1,q2}
\tilde q_1=q_1+\frac{1}{2}, \tilde q_2=q_2.
\end{equation}

Actually, we just changed the coordinate horizontally to move the point $(-\frac{1}{2}, 0)$ to the origin. 

By the transformation of elliptic coordinates (\ref{q1,q2}), we can get
\begin{equation}\left\{
\begin{aligned}
(q_1+\frac{1}{2})^2+q_2^2=&\frac{1}{4}(\cosh\mu+\cos\nu)^2\\
(q_1-\frac{1}{2})^2+q_2^2=&\frac{1}{4}(\cosh\mu-\cos\nu)^2.
\end{aligned}
\right.
\end{equation}
Together with (\ref{x, y for Euler}) and (\ref{tilde q1,q2}), we have
\begin{equation}\label{x, y, tilde q1, q2}\left\{
\begin{aligned}
&\tilde{q}_1^2+\tilde{q}_2^2=\frac{1}{4}(x+y)^2\\
&(\tilde{q}_1-1)^2+\tilde{q}_2^2=\frac{1}{4}(x-y)^2.
\end{aligned}
\right.
\end{equation}

By (\ref{H to K}) and (\ref{x,y}), in the process of regularization using elliptic coordinate there is a time rescaling from the coordinate (q(t),p(t)) to coordinate $(x(\tau),y(\tau))$, also a rescaling from $(\tilde{q}(t),\tilde{p}(t))$ to $(x(\tau),y(\tau))$ considering the extra horizontal shift as we mentioned above.
Their relationship is 

$$d\tau=\frac{4}{x^2-y^2}dt$$
Using (\ref{x, y, tilde q1, q2}), we get

\begin{equation}
\begin{aligned}
\tau_{\tilde{m}_1,\tilde{m}_2}(t)=&\int_0^t{\frac{4}{x^2-y^2}}dt'\\
=&\int_0^t{\frac{4}{\sqrt{(\tilde{q}_1^2+\tilde{q}_2^2)((\tilde{q}_1-1)^2+\tilde{q}_2^2)}}}dt'
\end{aligned}
\end{equation}

Assume $\tilde{m}_1=M_1$ and $\tilde{m}_2=0$, then $(\tilde q,\tilde p)$ is a solution of Kepler problem with Hamiltonian function
\begin{equation}
H(\tilde{q},\tilde{p})=T(\tilde{q},\tilde{p})-U(\tilde{q},\tilde{p})
\end{equation}
 where
$$U(\tilde{q},\tilde{p})=\frac{M_1}{\sqrt{\tilde{q}_1^2+\tilde{q}_2^2}}$$
and we can compute the period in elliptic coordinate by 
\begin{equation}\label{tau 1 (kappa)}
\tau_{c}^1(-\kappa)=\tau_{M_1,0}(T):=\tau_{M_1}(T),
\end{equation}
where $T$ is the periodic of the orbit in the origin coordinate.
Analogously, assume $\tilde{m}_1=M_2$ and $m_2=0$, $(\tilde q,\tilde p)$ is a solution of Kepler problem with Hamiltonian function
\begin{equation}\label{H?}
H(\tilde{q},\tilde{p})=T(\tilde{q},\tilde{p})-U(\tilde{q},\tilde{p})
\end{equation}
 where
$$U(\tilde{q},\tilde{p})=\frac{M_2}{\sqrt{\tilde{q}_1^2+\tilde{q}_2^2}}$$
and
\begin{equation}\label{tau 2 (kappa)}
\tau_{c}^2(\kappa)=\tau_{M_2,0}(T):=\tau_{M_2}(T).
\end{equation}

Step2: About the Euler integral used to compute the period of the Kepler problem.

In the elliptic coordinate $\tau_{c}^1(\kappa)$ and $\tau^2_{c}(\kappa)$ only depend on $c$ and $\kappa$ when the masses are fixed. While in the initial coordinate $c$ and $\kappa$ are the total energy and the Euler integral of the Kepler problem respectively. Given these two integrals, we can know the orbit of Kepler problem exactly under any initial condition. Using the information of the orbit, we can compute $\tau_{M_1}(2\pi)$ and $\tau_{M_2}(2\pi)$.

For Euler problem with Hamiltonian (\ref{H of Euler}) in the original coordinate, define the Euler integral as 
$$E=||L||^2-e_1\cdot(\tilde{p}\times L-\tilde{m}_1\frac{\tilde{q}}{||\tilde{q}||}+\tilde{m}_2\frac{\tilde{q}-e_1}{||\tilde{q}-e_1||})$$
where $e_1=(1,0)$ and $L$ is the angular momentum
$$L=\tilde{q}\times \tilde{p}.$$

As in Lemma 3.1 in (\cite{Pinzari}), we now show that $E=K_{c,0}^2$.

$$L=\tilde{q}\times\tilde{q}=\tilde{q}\times p=q_1p_2-q_2q_1+\frac{1}{2}p_2.$$
Then
$$\begin{aligned}||L||^2-e_1\cdot (p\times L)=&||L||^2-e_1\cdot (\tilde{p}\times L)\\=&(q_1p_2-q_2q_1+\frac{1}{2}p_2)^2-p_2(q_1p_2-q_2p_1+\frac{1}{2}p_2)\\=&(q_1^2-\frac{1}{4})p_2^2+q_2^2p_1^2-2q_1q_2p_1p_2.\end{aligned}$$
By (\ref{q1,q2}), (\ref{x, y for Euler}), (\ref{p_mu to p_x}) and (\ref{p_1 to p_mu}), under a direct computation, we get
$$||L||^2-e_1\cdot (p\times L)=\frac{(x^2-1)(1-y^2)}{(x^2-y^2)}(p_y^2-p_x^2).$$
By (\ref{x, y, tilde q1, q2})
$$e_1\cdot\tilde{m}_1\frac{\tilde{q}}{||\tilde{q}||}=\tilde{m}_1\frac{xy+1}{x+y},$$
$$e_1\cdot\tilde{m}_2\frac{\tilde{q}-e_1}{||\tilde{q}-e_1||}=\tilde{m}_2\frac{xy-1}{x-y}.$$
Therefore, $$\begin{aligned}E=&||L||^2-e_1\cdot (p\times L)\\=&\frac{(x^2-1)(1-y^2)}{(x^2-y^2)}(p_y^2-p_x^2)+\tilde{m}_1\frac{xy+1}{x+y}-\tilde{m}_2\frac{xy-1}{x-y}\\=&\frac{x^2-1}{x^2-y^2}K_{c,0}^2-\frac{1-y^2}{x^2-y^2}K_{c,0}^1\\=&\frac{x^2-1}{x^2-y^2}\kappa-\frac{1-y^2}{x^2-y^2}(-\kappa)\\=&\kappa\end{aligned}.$$

For Kepler problem with Hamiltonian 
\begin{equation}
H(\tilde{q},\tilde{p})=T(\tilde{q},\tilde{p})-U(\tilde{q},\tilde{p}),
\end{equation}
 where
$$U(\tilde{q},\tilde{p})=\frac{M}{\sqrt{\tilde{q}_1^2+\tilde{q}_2^2}},$$
the Euler integral become
\begin{equation}\label{Euler integral}
E=||L||^2-e_1\cdot A,\end{equation}
where $A$ is the Runge-Lenz vector
$$A=\tilde{p}\times L-M \frac{\tilde{q}}{||\tilde{q}||}.$$
and since $L$ and $A$ are both first integrals for the Kepler problem, $E$ is also a first integral.

For Kepler problem, fix the energy $c$ and the Euler integral E, given the initial condition $(\tilde{q},\tilde{p})=(\tilde{q}_0,\tilde{p}_0)$, there is a unique elliptic orbit $\mathcal{O}$ going through $(\tilde{q}_0,\tilde{p}_0)$. Let $P$ be the vector perihelion of this orbit, $\nu$ be the angular from $e_1$ to $P$. we know that 
$$A=MeP.$$ Define
$$\omega=\nu+\frac{\pi}{2}.$$
Actually, we rotate the vector $P$ about the origin for $\frac{\pi}{2}$ to get a vector $n$, then $\omega$ is just the angular from $e_1$ to $n$.
Together with the following (\ref{r}) and (\ref{r from theta}), the Euler integral can be rewritten as
\begin{equation}\label{Euler integral 1}
E=M(a(1-e^2)-e\sin \omega).
\end{equation}
where $a$ is the major semi axis.

Step 3: Computation of the period of the Kepler problem.

With these preparation, we are able to compute $\tau_M(T)$ by changing its parameter from $t$ to $\theta$ and then to $\xi$.
Let $\theta$ be the true anomaly, $\xi$ the eccentric anomaly of orbit $\mathcal{O}$, 
then
\begin{equation}
\left\{
\begin{aligned}
\tilde{q}_1=&a\cos(\theta+\nu)=a\sin(\theta+\omega),\\
\tilde{q}_2=&b\sin(\theta+\nu)=-b\cos(\theta+\omega).
\end{aligned}
\right.
\end{equation}
The relationship between $\theta$ and $\xi$ is 
\begin{equation}\label{xi to theta}
\cos\xi=\frac{e+\cos\theta}{1+e\cos\theta}, \sin\xi=\frac{\sqrt{1-e^2}\sin\theta}{1+e\cos\theta},
\end{equation}
and
\begin{equation}\label{theta to xi}
\cos\theta=\frac{\cos\xi-e}{1-e\cos\xi}, \sin\theta=\frac{\sqrt{1-e^2}\sin\xi}{1-e\cos\xi},
\end{equation}
where $e$ is the eccentricity of the orbit $\mathcal{O}$.
As a result,
\begin{equation}\label{d theta to d xi}
d\theta=\frac{\sqrt{1-e^2}}{1-ecos\xi}d\xi
\end{equation}
and
\begin{equation}\label{r}
r=a(1-ecos\xi)=\frac{a(1-e^2)}{1+ecos\theta}.
\end{equation}
where $r=\sqrt{\tilde{q}^2_1+\tilde{q}^2_2}$, $a$ is the major semi axis and $b$ is the minor semi axis.
By \cite{Arnold}, we know that
\begin{equation}\label{r from theta}
r=\frac{A^2/M}{1+ecos\theta},
\end{equation}
the Angular momentum 
\begin{equation}\label{A}
A=r^2\frac{d\theta}{dt}
\end{equation}
is a constant,
and $a$ only depends on the mass and the total energy c.
\begin{equation}\label{a from c}
a=\frac{M}{2|c|}.
\end{equation}
By (\ref{r}), (\ref{r from theta}) and (\ref{A}), we can infer that 
\begin{equation}\label{A to a}
A=\sqrt{Ma(1-e^2)},
\end{equation}
and
\begin{equation}\label{t to theta}
dt=\frac{r^2}{A}d\theta.
\end{equation}

\begin{equation}
\tau_M(T)=\int_0^{T}\frac{4}{\sqrt{\big(\tilde{q}_1^2(t)+\tilde{q}_2^2\big)\big((\tilde{q}_1(t)-1)^2+\tilde{q}_2^2(t)\big)}}dt.
\end{equation}
We know that
\begin{equation}
\tilde{q}_1^2(t)+\tilde{q}_2^2(t)=r^2.
\end{equation}

$$(\tilde{q}_1(t)-1)^2+\tilde{q}_2^2(t)=r^2-2a\sin(\theta+\omega)+1$$
Together with (\ref{t to theta}),
$$\tau_M(T)=4\int_0^{2\pi}\frac{r}{aA\sqrt{r^2-2a\sin(\theta+\omega)+1}}d\theta.$$
By (\ref{theta to xi}),
$$\begin{aligned}&r^2-2a\sin(\theta+\omega)+1\\=&a^2\big((1-e\cos\xi)^2-\frac{2}{a}(\sqrt{1-e^2}\sin\xi\cos\omega+(\cos\xi-e)\sin\omega)+\frac{1}{a^2}\big)\end{aligned}$$
Together with (\ref{d theta to d xi}),

$$\begin{aligned}&\tau_M(T)\\=&4\int_0^{2\pi} r(1-e^2)^{\frac{1}{2}}a^{-1}A^{-1} (1-e\cos\xi)^{-1}\\ &\cdot \big((1-e\cos\xi)^2-\frac{2}{a}(\sqrt{1-e^2}\sin\xi\cos\omega+(\cos\xi-e)\sin\omega)+\frac{1}{a^2}\big)^{-\frac{1}{2}} d\xi.\end{aligned}$$
By (\ref{A to a}) and (\ref{r}),

$$\begin{aligned}&\tau_M(T)\\=&4\int_0^{2\pi}
(Ma)^{-\frac{1}{2}}\\
&\cdot\big((1-e\cos\xi)^2-\frac{2}{a}(\sqrt{1-e^2}\sin\xi\cos\omega+(\cos\xi-e)\sin\omega)+\frac{1}{a^2}\big)^{-\frac{1}{2}} d\xi.\end{aligned}$$
By(\ref{a from c}),

\begin{equation}\label{tau from xi}
\begin{aligned}
&\tau_M(T)\\=&4\sqrt{2|c|}\int_0^{2\pi}\big(M^2(1-e\cos\xi)^2\\&-4|c|M(\sqrt{1-e^2}sin\xi\cos\omega+(\cos\xi-e)\sin\omega)+4c^2\big)^{-\frac{1}{2}} d\xi.
\end{aligned}
\end{equation}

We can simplify (\ref{tau from xi}) using the Euler integral in the following.
Given the energy $H=c$ and Euler integral $E=\kappa$,
\begin{equation}\label{Euler integral 2}
M(a(1-e^2)-e\sin \omega)=\kappa,
\end{equation}
together with (\ref{a from c}),
we have
$$M(\frac{M}{2|c|}(1-e^2)-e\sin \omega)=\kappa.$$
For the orbit with eccentricity $e=1$
From (\ref{tau 1}) and (\ref{tau 2}), we can find that the period only depend on the energy $c$ and the Euler integral $\kappa$, not depend on the shape of the exact orbit given an initial condition, namely, not depend on the eccentricity and the angular $\omega$, so we can just choose $e=1$ to get a simpler form of (\ref{tau from xi}). Plug $e=1$ into  (\ref{Euler integral 2}), we get
\begin{equation}
\sin\omega=-\frac{\kappa}{M}.
\end{equation} 
Since $-\frac{M_2}{2}\leq\kappa\leq\frac{M_1}{2}$, while for the Kepler problem with mass $M$, we have $-\frac{M}{2}\leq\kappa\leq\frac{M}{2}$, therefore, \begin{equation}\label{assumption}
|\frac{\kappa}{M}|\le1\end{equation} 
This shows that $\omega$ is sensiable. Then (\ref{tau from xi}) becomes
\begin{equation}\label{tau 3}
\tau_M(T)=4\sqrt{2|c|}\int_0^{2\pi}\frac{d\xi}{\sqrt{M^2(1-\cos\xi)^2-4|c|\kappa(1-\cos\xi)+4c^2}}.
\end{equation}
Set $z=1-\cos\xi$,
then (\ref{tau 3}) finally becomes
\begin{equation}\label{tau 4}
\tau_M(T)=4\sqrt{2|c|}\int_0^{2}\frac{dz}{\sqrt{z(2-z)(M^2z^2-4|c|\kappa z+4c^2)}}.
\end{equation}
For $c<0$, this is just
\begin{equation}\label{tau 5}
\tau_M(T)=4\sqrt{-2c}\int_0^{2}\frac{dz}{\sqrt{z(2-z)(M^2z^2+4c\kappa z+4c^2)}}.
\end{equation}
By (\ref{tau 1 (kappa)}) and (\ref{tau 2 (kappa)}), we have
$$\tau_{c}^1(-\kappa)=4\sqrt{-2c}\int_0^{2}\frac{dz}{\sqrt{z(2-z)(M_1^2z^2+4c\kappa z+4c^2)}}.$$
and
$$\tau_{c}^2(\kappa)=4\sqrt{-2c}\int_0^{2}\frac{dz}{\sqrt{z(2-z)(M_2^2z^2+4c\kappa z+4c^2)}}.$$

Note
$$\tau(M, \kappa)=4\sqrt{-2c}\int_0^{2}\frac{dz}{\sqrt{z(2-z)(M^2z^2+4c\kappa z+4c^2)}}.$$
then $\tau_c^1(-\kappa)=\tau(M_1,\kappa)$, $\tau_c^2(\kappa)=\tau(M_2,\kappa)$.

Step 4: Computation of the sign of (\ref{f'' for Euler}).
Note \begin{equation}\label{A,B,C}A=M^2, B=-2c\kappa, C=4c^2,\end{equation}
then we have
$$\tau(M, \kappa)=4\sqrt{-2c}\int_0^{2}\frac{dz}{\sqrt{z(2-z)(Az^2-2Bz+C)}}$$
and
$$\partial_\kappa W(\kappa)=-2c\cdot\partial_B W(\kappa).$$  

Since the energy $c<0$, $\partial_\kappa W(\kappa)$ and $\partial_B W(\kappa)$ have the same sign. We know that $\partial_B W(\kappa)$ and $\partial_B \ln W(\kappa)$ also have the same sign because of 
$\partial_B \ln W(\kappa)=\frac{\partial_B W(\kappa)}{W(\kappa)}$
and $W(\kappa)>0$. 
\begin{equation}
\begin{aligned}
\partial_B \ln W(\kappa)
=&\partial_B \ln \frac{\tau(M_2,\kappa)}{\tau(M_1,\kappa)}\\
=&\partial_B \ln \tau(M_2,\kappa)-\partial \ln \tau(M_1,\kappa)\\
=&\frac{\partial_B\tau(M_2,\kappa)}{\tau(M_2,\kappa)}-\frac{\partial_B\tau(M_1,\kappa)}{\tau(M_1,\kappa)}
\end{aligned}
\end{equation}

Define a function $\eta(M,\kappa)$
$$\eta(M,\kappa):=\frac{\partial_B\tau(M,\kappa)}{\tau(M,\kappa)}$$
then $$\partial_B \ln W(\kappa)=\eta(M_2,\kappa)-\eta(M_1,\kappa).$$
$\partial_B \ln W(\kappa)$ is definitely positive or negative if $\eta(M,\kappa)$ is a monotonic function with respect to $M$. Since $M>0$ and $A=M^2$, the monotonicity of the function $\eta$ with respect to the variable $A$ are the same as that of $M$. Here we can also denote $\eta(M,\kappa)$ by $\eta(A,B)$ and denote $\tau(M,\kappa)$ by $\tau(A,B)$.
$$\eta(A,B)=\frac{\partial_B\tau(A,B)}{\tau(A,B)}$$
\begin{equation}\label{partial eta}
\partial_A \eta(A, B)=-\frac{\partial_B\tau(A,B)\cdot\partial_A\tau(A,B)-\partial_A\partial_B\tau(A,B)\cdot\tau(A,B)}{\tau(A,B)^2}
\end{equation}
Set
$$Q(A,B,C,x):=Ax^2-2Bx+C,$$
$$f_{\alpha}^{\beta}(A,B,C,x):=\frac{x^{\beta}}{Q(A,B,C,x)^{\alpha}\sqrt{2-x}},$$
$$\begin{aligned}g_{\alpha}^{\beta}(A,B,C,x ):=&4\sqrt{-2c}\int_0^2 f_{\alpha}^{\beta}(A,B,C,x)dx\\=&4\sqrt{-2c}\int^2_0\frac{x^{\beta}}{Q(A,B,C,x)^{\alpha}\sqrt{2-x}}dx\end{aligned}$$
Note the numerator of (\ref{partial eta}) as $S(A,B,C)$
\begin{equation}\label{S}
\begin{aligned}
S(A,B,C)
=&\partial_B\tau(A,B)\cdot\partial_A\tau(A,B)-\partial_A\partial_B\tau(A,B)\cdot\tau(A,B)\\
=&\frac{1}{2}\big(3g_{\frac{5}{2}}^{\frac{5}{2}}g_{\frac{1}{2}}^{-\frac{1}{2}}(A,B,C)-g_{\frac{3}{2}}^{\frac{3}{2}}(A,B,C)g_{\frac{3}{2}}^{\frac{1}{2}}(A,B,C)\big).
\end{aligned}
\end{equation}
Let
$$p(A,B,C,x):=f_{\frac{1}{2}}^{-\frac{1}{2}}(A,B,C,x),$$
then
$$f_{\frac{5}{2}}^{\frac{5}{2}}(A,B,C,x)=\frac{x^3}{Q(A,B,C,x)^2}p(A,B,C,x),$$
$$f_{\frac{3}{2}}^{\frac{3}{2}}(A,B,C,x)=\frac{x^2}{Q(A,B,C,x)^2}p(A,B,C,x),$$
$$f_{\frac{3}{2}}^{\frac{1}{2}}(A,B,C,x)=\frac{x}{Q(A,B,C,x)^2}p(A,B,C,x).$$
and  (\ref{S}) becomes
$$\begin{aligned}S(A,B,C)=&\frac{1}{2}\int_0^2\int_0^2\bigg(\frac{3x^3}{Q(A,B,C,x)^2}-\frac{x^2y}{Q(A,B,C,x)Q(A,B,C,y)}\bigg)\\&p(A,B,C,x)p(A,B,C,y)dxdy\end{aligned}$$
Since the functions $x\rightarrow \frac{x}{Q(A,B,C,x)}$ and $x\rightarrow\frac{x^2}{Q(A,B,C,x)}$ have the same monotonicity on $x\in[0,2]$.
Using the Chebyshev integral inequality in proposition 4, we can find $S>0$.
Note $Q(x)=Q(A,B,C,x)$, $p(x)=p(A,B,C,x)$ and $S=S(A,B,C)$ for short, indeed, 
\begin{equation}
\begin{aligned}
S=&\frac{1}{2}\int^2_0\int^2_0\bigg(\frac{3x^3}{Q(x)^2}-\frac{x^2y}{Q(x)Q(y)}\bigg)p(x)p(y)dxdy\\
=&\frac{1}{2}\int^2_0\frac{3x^3}{Q(x)^2}p(x)dx\int^2_0p(y)dy-\frac{1}{2}\int^2_0\frac{x^2}{Q(x)}p(x)dx\int^2_0\frac{y}{Q(y)}p(y)dy\\
\geq&\frac{1}{2}\int^2_0\frac{3x^3}{Q(x)^2}p(x)dx\int^2_0p(y)dy-\frac{1}{2}\int_0^2\frac{x^3}{Q(x)^2}p(x)dx\int_0^2p(y)dy\\
=&\int^2_0\frac{x^3}{Q(x)^2}p(x)dx\int^2_0p(y)dy\\
>&0
\end{aligned}
\end{equation}
\begin{proposition} (Chebyshev Integral Inequality)
let $f$, $g$, $p:\mathbb{R}\rightarrow\mathbb{R}$ with $p\ge 0$, $p$, $fp$, $gp$, $fgp$ are integrable on $\mathbb{R}$, $\int_{\mathbb{R}}p(x)=1$, $f$ and $g$ are both decreasing or increasing on the support of $p$, then 
\begin{equation}\label{Chebyshev1}
\bigg(\int_{\mathbb{R}}f(x)p(x)dx\bigg)\bigg(\int_{\mathbb{R}}g(y)p(y)dy\bigg)\le\int_{\mathbb{R}}f(x)g(x)p(x)dx.
\end{equation}
Releasing the assumption $\int_{\mathbb{R}}p(x)=1$, (\ref{Chebyshev1}) is replaced by
\begin{equation}\label{Chebyshev2}
\bigg(\int_{\mathbb{R}}f(x)p(x)dx\bigg)\bigg(\int_{\mathbb{R}}g(y)p(y)dy\bigg)\le\bigg(\int_{\mathbb{R}}f(x)g(x)p(x)dx\bigg)\bigg(\int_{\mathbb{R}}p(y)dy\bigg).
\end{equation}
\begin{proof}
As $f$, $g$ are decreasing or increasing on the support of p, for any $x$, $y$ on such support, we have 
$$0\leq (f(x)-f(y))(g(x)-g(y))=f(x)g(x)-f(x)g(y)-f(y)g(x)-f(y)g(y).$$
Multiplying by $p(x)p(y)$ and taking the integral on $\mathbb R^2$ we get the proposition.
\end{proof}
\end{proposition}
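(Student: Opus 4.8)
The plan is to reduce the entire inequality to a single manifestly nonnegative pointwise quantity and then integrate it against the product weight $p(x)p(y)$. The conceptual heart of the argument is the observation that the hypothesis ``$f$ and $g$ are both increasing, or both decreasing, on the support of $p$'' is exactly what guarantees that the two factors $f(x)-f(y)$ and $g(x)-g(y)$ always share the same sign: if both functions increase, each difference has the sign of $x-y$; if both decrease, each has the sign of $y-x$; either way their product is nonnegative.

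First I would record, for all $x,y$ in the support of $p$, the expansion
$$0\le (f(x)-f(y))(g(x)-g(y))=f(x)g(x)-f(x)g(y)-f(y)g(x)+f(y)g(y).$$
Since $p\ge 0$, multiplying through by $p(x)p(y)$ preserves the inequality. The integrability of $p$, $fp$, $gp$ and $fgp$ is precisely what is needed to apply Fubini--Tonelli, so I may integrate the resulting expression over $\mathbb{R}^2$ term by term and factor each double integral into a product of one-dimensional integrals.

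Next I would evaluate the four terms using the symmetry $x\leftrightarrow y$. The first and last terms each equal $\left(\int_{\mathbb{R}}fg\,p\right)\left(\int_{\mathbb{R}}p\right)$, while the two middle terms each equal $\left(\int_{\mathbb{R}}fp\right)\left(\int_{\mathbb{R}}gp\right)$. Summing, the nonnegativity becomes
$$0\le 2\left(\int_{\mathbb{R}}fg\,p\right)\left(\int_{\mathbb{R}}p\right)-2\left(\int_{\mathbb{R}}fp\right)\left(\int_{\mathbb{R}}gp\right),$$
which is exactly (\ref{Chebyshev2}). Imposing the normalization $\int_{\mathbb{R}}p=1$ collapses the factor $\int_{\mathbb{R}}p$ to unity and recovers (\ref{Chebyshev1}).

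I do not anticipate a serious obstacle. The only two places demanding care are the sign analysis of the monotonicity hypothesis, which the single pointwise inequality above dispatches in one line, and the interchange of integration order, which follows immediately from the stated integrability assumptions. Everything else is bookkeeping; in particular no further structure of $f$, $g$, or $p$ is used, so the proof applies verbatim to the concrete weight-and-monotonicity pairs needed in the sign computation of $S(A,B,C)$ above.
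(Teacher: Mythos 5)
Your proposal is correct and follows essentially the same argument as the paper: expand the pointwise inequality $(f(x)-f(y))(g(x)-g(y))\ge 0$, multiply by $p(x)p(y)$, and integrate over $\mathbb{R}^2$ using symmetry in $x\leftrightarrow y$. In fact your version is slightly cleaner, since you carry out the term-by-term bookkeeping explicitly and write the expansion with the correct sign $+f(y)g(y)$ on the last term, whereas the paper's displayed expansion contains a sign typo ($-f(y)g(y)$).
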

Since $S>0$, we get $\partial_A \eta(A, B)<0$. As a result, $\eta(A,B)$ is a decreasing function. Since $m_1>0$, $m_2\leq 0$ and $M_1=m_1+m_2$, $M_2=m_1-m_2$, we have $M_1<M_2$, then $\partial_B \ln W(\kappa)<0$, finally, 
\begin{equation}\label{partial W}
\partial_{\kappa}W(\kappa)<0.
\end{equation}
 This result is just opposite to the case when $m_1$ and $m_2$ are both positive in  \cite{Pinzari}. 

By (\ref{partial W}) and $\tau_c^1(-\kappa)>0$, we have 
$$f''_{c,0}(\mathcal T_c^1(-\kappa))<0.$$

Under the four steps above, we have proved the following theorem. 

\begin{theorem}
When the energy $c<-M_1$, the bounded component around the fixed center $e$ of the regularized energy hypersurface of Euler problem with two fixed centers $e$ and $m$ of masses $m_1$ and $m_2$ respectively satisfying $m_1>0, m_2\leq 0, m_1\geq|m_2|$ arises as a convex toric domain.
\end{theorem}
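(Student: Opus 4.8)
The plan is to reduce the geometric statement to the sign of a single analytic quantity, and then to pin that sign down through an explicit Kepler computation followed by a monotonicity argument. By the definition of a convex toric domain in Section~5, it suffices to show that $\hat\Omega$ is compact and convex, where $\Omega=\text{im}\,\mu_{c,0}$. The slicing (\ref{slicing of Euler}) exhibits $\Omega$ as the region bounded in the first quadrant by the graph $\Gamma_{f_{c,0}}$ of the strictly decreasing function $f_{c,0}$, so convexity of $\hat\Omega$ is equivalent to concavity of $f_{c,0}$, i.e. to $f_{c,0}''\le 0$; Delzant's theorem (\cite{Delzant}) guarantees that this property of the image is intrinsic to the hypersurface and independent of the chosen slicing. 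Thus everything reduces to controlling the sign of $f_{c,0}''$.

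First I would use (\ref{f'' for Euler}), namely $f_{c,0}''(\mathcal T_c^1(-\kappa))=\partial_\kappa W(\kappa)/\tau_c^1(-\kappa)$ with $\tau_c^1(-\kappa)>0$, so that the claim becomes $\partial_\kappa W(\kappa)<0$ for $W(\kappa)=\tau(M_2,\kappa)/\tau(M_1,\kappa)$ as in (\ref{W}). To obtain a workable formula for the periods I would follow Pinzari's route (\cite{Pinzari}): each separated period depends only on the combined mass, so I may send the irrelevant mass to zero and reduce to a Kepler problem, identify the separation constant with the Euler integral $E=K_{c,0}^2=\kappa$, and evaluate the Kepler period by passing to the true and then the eccentric anomaly. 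Because the period is independent of the eccentricity, the convenient choice $e=1$ collapses everything to the closed form (\ref{tau 5}).

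The decisive and hardest step is the sign analysis of $\partial_\kappa W$. Writing $A=M^2$, $B=-2c\kappa$, $C=4c^2$ and using $c<0$, I would reduce to $\partial_B\ln W=\eta(M_2,\kappa)-\eta(M_1,\kappa)$ with $\eta=\partial_B\tau/\tau$, so the question becomes whether $\eta$ is monotone in $A$. Establishing $\partial_A\eta<0$ amounts to proving positivity of the numerator $S(A,B,C)$ appearing in (\ref{partial eta}); I expect this to be the main obstacle. The idea is to rewrite $S$ as a symmetric double integral over $[0,2]^2$ against the positive weight $p(x)p(y)$ and to compare the two resulting terms using that $x/Q(x)$ and $x^2/Q(x)$ have the same monotonicity on $[0,2]$, so that the Chebyshev integral inequality (Proposition~4) yields $S>0$.

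Finally, with $\eta$ decreasing in $M$, the hypothesis $m_2\le 0$ enters decisively: it forces $M_1=m_1+m_2<m_1-m_2=M_2$, the reversal of the inequality $M_1>M_2$ holding in Pinzari's two positive mass setting. Hence $\partial_B\ln W=\eta(M_2,\kappa)-\eta(M_1,\kappa)<0$, and since $c<0$ gives $\partial_\kappa W=-2c\,\partial_B W<0$, we conclude $f_{c,0}''<0$. Concavity of $f_{c,0}$ makes $\hat\Omega$ convex, so $X_\Omega$ is a convex toric domain, as claimed; the energy restriction $c<-M_1$ is precisely the regularization and cup condition ensuring that the slicing decomposition is valid in the first place.
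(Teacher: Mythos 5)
Your proposal is correct and follows essentially the same route as the paper's proof: reduction of convexity of the toric domain to the sign of $f_{c,0}''$ via (\ref{f'' for Euler}), Pinzari's reduction of the separated periods to Kepler periods through the Euler integral $E=K_{c,0}^2=\kappa$ and the eccentric anomaly with $e=1$, the substitution $A=M^2$, $B=-2c\kappa$, $C=4c^2$ with the monotonicity of $\eta=\partial_B\tau/\tau$ proved by the Chebyshev integral inequality applied to $S(A,B,C)$, and finally the reversal $M_1<M_2$ coming from $m_2\le 0$. No gaps; this is the paper's argument in the same order and with the same key steps.
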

Combining Gabriella Pinzari's result in \cite{Pinzari} and theorem 2, we have the following corollary for Euler problem.
\begin{corollary}
When the energy is less than the critical value, i.e. $c<c_o$, the toric domain $X_{\Omega_{m_2}}$ defined for the bounded component around the fixed center $e$ of the regularized energy hypersurface of the Euler problem  satisfying $m_1>0, m_2\leq 0, m_1\geq|m_2|$ is concave for $m_2\geq0$, convex for $m_2\leq 0$.
\end{corollary}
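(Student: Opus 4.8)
The plan is to read the corollary off the sign of the second derivative $f''_{c,0}$ of the function whose graph $\Gamma_{f_{c,0}}$ is the moment-map image, and then to split into the two cases $m_2\le 0$ and $m_2\ge 0$, reducing each to a result already in hand. By Delzant's theorem (Theorem 2.1 of \cite{Delzant}) the image $\Omega_{m_2}$ determines $X_{\Omega_{m_2}}$ up to equivariant symplectomorphism, so the geometry of $X_{\Omega_{m_2}}$ is encoded in the shape of $\Gamma_{f_{c,0}}$. Since $f'_{c,0}<0$ throughout (by (\ref{f'}), equivalently $f'_{c,0}=-W<0$ from (\ref{W})), this graph is a strictly decreasing curve joining the two axes, and the region it bounds is convex precisely when $f_{c,0}$ is concave and concave precisely when $f_{c,0}$ is convex. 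I would therefore first record this dictionary explicitly against the definitions of convex and concave toric domain in Section 5: $f''_{c,0}\le 0$ corresponds to a convex toric domain and $f''_{c,0}\ge 0$ to a concave one.

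Next I would align the hypotheses so that the single assumption $c<c_0$ powers both inputs. For $m_2\le 0$, Lemma \ref{lem4} gives $c_0=-(\sqrt{m_1}+\sqrt{-m_2})^2$, and for $m_2\ge 0$, Lemma \ref{lem3} together with (\ref{estimate 1}) gives $c_0=-(\sqrt{m_1}+\sqrt{m_2})^2<-(m_1+m_2)=-M_1$; a one-line computation shows $c_0<-M_1$ in both regimes. Hence $c<c_0$ implies $c+M_1<0$, which is the cup condition making the separated potentials well-shaped and the entire toric construction valid, and in particular $c<c_0$ is strong enough to invoke Theorem 2, whose stated hypothesis is $c<-M_1$.

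Then I would dispatch the cases. For $m_2\le 0$ the statement is exactly Theorem 2: $X_{\Omega_{m_2}}$ is a convex toric domain. For $m_2\ge 0$ I would cite Pinzari's result in \cite{Pinzari}, but I would also observe that it fits the present machinery verbatim, which makes the two halves manifestly parallel. Indeed, Steps 1--4 produce the integral $\tau(M,\kappa)$ and, via the Chebyshev integral inequality, the strict inequality $S(A,B,C)>0$, hence $\partial_A\eta<0$ with $A=M^2$; this monotonicity does not see the sign of $m_2$. The only quantity that changes sign is the comparison of $M_1$ and $M_2$: for $m_2\ge 0$ one has $M_1=m_1+m_2\ge m_1-m_2=M_2$, so $\partial_B\ln W=\eta(M_2,\kappa)-\eta(M_1,\kappa)\ge 0$, whence $\partial_\kappa W\ge 0$ (using $c<0$) and $f''_{c,0}\ge 0$ by (\ref{f'' for Euler}). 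By the dictionary this is concavity, recovering Pinzari's conclusion. The borderline $m_2=0$ is the Kepler problem, where $M_1=M_2$ forces $\partial_\kappa W=0$ and $f''_{c,0}=0$, the common boundary of the two regimes.

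I expect the main obstacle to be bookkeeping rather than hard analysis: pinning down the precise dictionary between the sign of $f''_{c,0}$ and the Section 5 notions of convex and concave toric domain, including checking that the endpoints where the Arnold--Liouville tori degenerate to circles give smooth boundary points rather than corners, and verifying that Pinzari's normalization of the moment image in \cite{Pinzari} coincides with the one used here so that her ``concave'' is literally the regime $f''_{c,0}\ge 0$. Once this identification is fixed, the corollary follows immediately by combining Theorem 2 with \cite{Pinzari}.
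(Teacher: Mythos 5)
Your proposal is correct and takes essentially the same route as the paper: the corollary is obtained simply by combining Theorem 2 (the convex case $m_2\le 0$) with Pinzari's result (the concave case $m_2\ge 0$), which is all the paper itself does. The bookkeeping you supply---the dictionary between the sign of $f''_{c,0}$ and convexity/concavity of the toric domain, the verification that $c<c_0$ forces $c<-M_1=-(m_1+m_2)$ in both regimes so that both inputs apply, and the observation that Pinzari's case follows from the same monotonicity of $\eta$ with the comparison of $M_1$ and $M_2$ reversed---is exactly what the paper leaves implicit.
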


\section*{Acknowledgments}
I want to express my appreciation to Urs Frauenfelder for his suggestions and discussions on this paper during and after I visited Augsburg University.

\end{document}